\documentclass[11pt,a4paper]{amsart}
\usepackage[T1]{fontenc}
\usepackage{lmodern}
\usepackage{mathtools,amssymb,amsthm,enumitem,setspace,xcolor}
\usepackage[top=3.2cm,bottom=3.8cm,left=3cm,right=2cm]{geometry}
\usepackage[unicode=true,colorlinks,linkcolor=blue,citecolor=blue,urlcolor=blue,pagebackref]{hyperref}
\allowdisplaybreaks[1]
\mathtoolsset{showonlyrefs}
\newcommand{\papersection}[1]{\par\vskip25pt\section{\normalfont\bfseries #1}\vskip5pt}
\newcommand{\papersubsection}[1]{\subsection{#1}\ \par\nopagebreak[4]}
\makeatletter
\renewcommand{\@defaultbiblabelstyle}[1]{[#1]}
\makeatother
\AtBeginDocument{\typeout{PETTY-LAYOUT width=\the\textwidth; height=\the\textheight; baseline=\the\baselineskip; left=\the\oddsidemargin; top=\the\topmargin; headheight=\the\headheight; headsep=\the\headsep; footskip=\the\footskip}}
\hypersetup{pdftitle={Petty's equilateral-set conjecture in dimension five},pdfauthor={Kaiwen Yang}}
\newtheorem{theorem}{Theorem}[section]
\newtheorem{lemma}[theorem]{Lemma}

\theoremstyle{remark}
\newtheorem{remark}[theorem]{Remark}
\numberwithin{equation}{section}
\newcommand{\R}{\mathbb R}
\newcommand{\F}{\mathbb F}
\newcommand{\B}{\mathcal B}
\newcommand{\rank}{\operatorname{rank}}
\newcommand{\osc}{\operatorname{osc}}
\newcommand{\Hom}{\operatorname{Hom}}
\newcommand{\SO}{\operatorname{SO}}

\newcommand{\bal}{\mathrm{bal}}
\begin{document}
\enlargethispage{3pt}
\begin{center}
{\Large\bfseries Petty's equilateral-set conjecture in dimension five}
\vskip12pt
{\large Kaiwen Yang\textsuperscript{1,2}}\\[5pt]
{\small\textsuperscript{1}School of Mathematical Sciences, Tongji University\\
\textsuperscript{2}Tencent Hunyuan\\[3pt]
Email: \href{mailto:yangkaiwen@tongji.edu.cn}{\nolinkurl{yangkaiwen@tongji.edu.cn}}}
\end{center}
\vskip20pt
\begin{center}
\begin{minipage}{14cm}
\textbf{Abstract.} Petty conjectured that every $n$-dimensional real normed space contains
$n+1$ equidistant points. The conjecture was proved in dimensions three
and four by Petty and Makeev, respectively. We prove the five-dimensional
case: every five-dimensional real normed space contains six equidistant
points. The proof uses a bordism of equilateral configurations and a
homological obstruction.

\vskip5pt
\textbf{AI declaration.} The proof was generated by Tencent Hunyuan's
AI agent hyprover on September~29, 2026. The author organized, verified,
and rewrote the proof. The prompt was written by the author and is
reproduced in Appendix~\ref{app:prompt}.

\vskip5pt
\textbf{2020 Mathematics Subject Classification.}
Primary 46B20; Secondary 52A21.

\vskip5pt
\textbf{Keywords.} Equilateral set, normed space.
\end{minipage}
\end{center}

\papersection{Introduction}

A finite set $S$ in a real normed space $X$ is \emph{equilateral} if
there is a number $\lambda>0$ such that
\[
 \|x-y\|=\lambda,\qquad x,y\in S,\quad x\ne y.
\]
Let $e(X)$ denote the largest cardinality of an equilateral subset of
$X$. Petty's equilateral-set conjecture~\cite{Petty1971} asserts that
\[
 e(X)\geq \dim X+1
\]
for every finite-dimensional real normed space. The Euclidean regular
simplex shows that this proposed universal lower bound is sharp.
Equilateral sets are antipodal~\cite{Petty1971}. The theorem of Danzer
and Gr\"unbaum~\cite{DanzerGrunbaum1962} therefore yields
$e(X)\leq 2^{\dim X}$. For background on equilateral sets
and their relation to Minkowski geometry, see~\cite{Swanepoel2004,Thompson1996}.

The conjecture is immediate in dimension one. In the plane, choose a
unit vector $u$ and a continuous arc $\gamma$ on the unit circle from
$u$ to $-u$. The function $t\mapsto\|\gamma(t)-u\|$ takes the values
$0$ and $2$, and hence takes the value $1$. This gives the equilateral
triangle $\{0,u,\gamma(t)\}$. Petty~\cite{Petty1971} proved the three-dimensional case, and
Kobos~\cite{Kobos2013} gave an alternative proof. Makeev~\cite{Makeev2005}
proved the four-dimensional case. The present paper establishes the next case.

\begin{theorem}\label{thm:main}
Every five-dimensional real normed space contains an equilateral set of
six points. Equivalently, for every norm $N$ on $\R^5$ there are
$x_0,\ldots,x_5\in\R^5$ such that $x_0=0$ and
\[
 N(x_i-x_j)=1,\qquad 0\leq i<j\leq5.
\]
\end{theorem}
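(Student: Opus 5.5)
We follow the topological strategy indicated in the abstract: deform the given norm to the Euclidean norm, track the set of equilateral configurations along the deformation, and show by a homological obstruction that this set cannot disappear. By a standard approximation argument it suffices to treat norms $N$ that are smooth and strictly convex. Indeed, if $N_k\to N$ uniformly on the unit sphere and each $N_k$ has a unit equilateral configuration $x^{(k)}_0=0,\dots,x^{(k)}_5$, these configurations are bounded, and a limit point is a unit equilateral configuration for $N$. The limit points stay distinct, since all pairwise distances remain equal to $1$.

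\textbf{The configuration space.} Fix such an $N$ and set $N_t=(1-t)\|\cdot\|_2+tN$ for $t\in[0,1]$. Consider
\[
 W=\{(t,x_1,\dots,x_5)\in[0,1]\times(\R^5)^5:\ N_t(x_i)=1,\ N_t(x_i-x_j)=1\}.
\]
This is $15$ equations in $26$ variables, so after a generic perturbation of the family (parametric transversality, keeping the endpoint $t=0$ Euclidean) $W$ is a compact smooth $11$-manifold with boundary $W_0\sqcup W_1$. Each slice $W_t$ is a $10$-manifold, matching $\dim\SO(5)=10$. At $t=0$ the slice $W_0$ is exactly the set of regular Euclidean simplices with one vertex at $0$. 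It is a free $\SO(5)$-orbit, or rather an $O(5)$-orbit, which is two copies of $\SO(5)$, together with the labellings. The $O(5)$-action on $W_0$ is free, so $W_0$ is a nondegenerate, transversally cut out component. Hence $W$ gives a bordism from $W_0$ to $W_1$. If $W_1=\emptyset$, then $[W_0]=0$ in a suitable bordism or homology theory.

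\textbf{The obstruction.} The key point is to exhibit an invariant of $W_0$ that survives the deformation. The plan is to use an equivariant structure. The symmetric group $S_6$ acts on configurations by permuting the six points $0,x_1,\dots,x_5$ and translating so that the image of $0$ is again $0$. We may also use $x\mapsto -x$, which preserves every norm. These actions preserve every $N_t$, so $W$ is an $S_6\times\mathbb Z/2$-equivariant bordism. It suffices to restrict to a $p$-subgroup $G$, for instance $G=\mathbb Z/2$ generated by a transposition combined with $-1$, or a $\mathbb Z/3$ or $\mathbb Z/5$ cyclic subgroup. One must check that $G$ acts freely on $W$; this follows because a fixed configuration would force coincident points. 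The obstruction is then to show that $W_0/G$ is not an equivariant boundary. Concretely, we would compute the image of the fundamental class $[W_0/G]\in H_{10}(BG;\F_p)$ under the classifying map, or an appropriate Stiefel--Whitney or characteristic number. Equivalently, we would show that the map $W_0\to EG$ pushes the fundamental class of $\SO(5)\cdot$(orbit) to a nonzero class. A natural candidate is $G=\mathbb Z/5$ acting by cyclically permuting $x_1,\dots,x_5$ after the translation above, or $\mathbb Z/3$ from $S_6$. Here $W_0\cong \SO(5)\times(\text{finite set})$, with the $G$-action realized by left multiplication by an element of $O(5)$ of order $p$. In that case the question reduces to whether the class of $\SO(5)/G\to BG$ is nonzero in top degree. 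This can be computed from the cohomology of $\SO(5)$ and the Serre spectral sequence of $\SO(5)\to\SO(5)/G\to BG$.

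\textbf{Main obstacle.} The hardest step is precisely this nonvanishing computation, together with choosing the right group. In dimensions $3$ and $4$ analogous counts work for numerical reasons, and in dimension $5$ one must find a $G$ for which the relevant degree-$10$ class in $H^*(BG)$ is hit with odd or $p$-nontrivial multiplicity. I would first try $G=\mathbb Z/2\times\mathbb Z/2$ and $\mathbb Z/3$ inside $S_6\times\{\pm1\}$, computing via transfer in $\F_p$-cohomology. A secondary technical issue is ensuring transversality while preserving the group action. This is handled by equivariant transversality, since the action is free, and it keeps $W$ from acquiring boundary at degenerate configurations. This part is automatic, because $N_t(x_i-x_j)=1$ bounds the configurations away from collisions and from infinity, so $W$ is compact.
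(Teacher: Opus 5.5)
There is a genuine gap, and it sits exactly at the step you call the main obstacle. Everything rests on a nonzero invariant of the free $G$-manifold $W_0$, and none is produced. For the groups you propose, the invariant is in fact zero.

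\begin{itemize}
\item \textbf{Odd cyclic groups.} A relabeling of odd prime order $p$ acts on each component of $W_0\cong O(5)$ by right translation by some $g\in\SO(5)$. Such a $g$ lies in a circle subgroup $S\subset\SO(5)$, which also acts freely. Let $f\colon\SO(5)/G\to BG$ be the classifying map, $v\in H^2(BG;\F_p)$ the polynomial generator, $q\colon\SO(5)/G\to\SO(5)/S$ the projection, and $\bar e$ the mod $p$ Euler class of $\SO(5)\to\SO(5)/S$. Then $f^*v=q^*\bar e$. Since $\SO(5)/S$ is nine-dimensional, $f^*(v^5)=0$, so $f_*[\SO(5)/G]=0$ in $H_{10}(BG;\F_p)\cong\F_p$.
\item \textbf{The involution you single out.} A transposition combined with $-1$ also acts by right translation by an element of $\SO(5)$. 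The same argument with $w^2$ in place of $v$ shows its invariant vanishes.
\item \textbf{No free-bordism invariant can work for odd $p$.} The Atiyah--Hirzebruch spectral sequence gives $\widetilde\Omega_{10}(B\mathbb Z/p)=0$, since $\widetilde H_s(B\mathbb Z/p;\Omega_t)$ vanishes unless $s$ is odd and $t\equiv0\bmod4$. Also $\SO(5)/G$ is parallelizable, because right-invariant framings descend. Hence $W_0$ genuinely bounds a free $\mathbb Z/p$-manifold.
\end{itemize}

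The structural reason is that your scheme uses nothing about the norms beyond compactness and transversality of the zero set. An abstract free bordism carries no obstruction here.

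What is missing is geometric control of the degenerate configurations. This is what lets the paper replace $BC_5$ by the manifold $M=\SO(5)/C_5$, whose fundamental class is nonzero and onto which the Euclidean boundary maps by the identity. The paper's argument runs as follows.
\begin{itemize}
\item Strict convexity and the Sch\"urmann--Swanepoel classification force every configuration matrix $A_y$ along the path to have rank at least four.
\item Strict antipodality forces each rank-four relation line to satisfy $\osc(c)<S(c)$.
\item Cutting at a small regular level $\det A=\varepsilon$, polar decomposition maps $\{\det A\geq\varepsilon\}/C_5$ to $M$.
\item A uniform balance margin makes the new boundary factor through $(\SO(5)\times K_\delta)/C_5$. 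This space is homotopy equivalent, over $M$, to an $\SO(5)$-bundle over a finite graph.
\item Because $H_9(\SO(5);\F_5)=0$, the tenth homology of that bundle is generated by fiber classes. Each fiber maps to $M$ with degree $5\equiv0$, so the boundary relation forces $[M]=0$, a contradiction.
\end{itemize}
An argument that tracks only the equivariant bordism class of $W_0$, as yours does, cannot see any of this.

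A smaller point: freeness of the $O(5)$-action does not by itself show that $W_0$ is transversally cut out. You must check that the edge differential is onto, as the paper does using that the five edges at a vertex of the simplex form a basis.
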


The theorem concerns the existence of an equilateral set; it does not
require a prescribed set to extend. This distinction is relevant because
maximal equilateral sets can be small \cite{Petty1971,SwanepoelVilla2013}. Nor do we require the six points to
be affinely independent. Configurations of affine rank four occur in
the argument and must be retained.

We argue by contradiction. The absence of a six-point equilateral set
persists under sufficiently small perturbations of a norm. A hypothetical
counterexample can therefore be chosen smooth and strongly convex. Join
it to the Euclidean norm and perturb the path, keeping the endpoints
fixed. All centered, labeled unit-equilateral configurations along the
path form a compact oriented eleven-manifold whose boundary is the
Euclidean configuration space $O(5)$.

The main issue is the behavior of the configuration matrix at rank loss.
The classification of Sch\"urmann and Swanepoel \cite[Theorem~2.3]{SchurmannSwanepoel2006} excludes six equilateral points
in a strictly convex three-dimensional normed space. Thus every matrix
on the bordism has rank at least four. At rank four its kernel is a
line of affine relations. Strict antipodality gives an explicit
inequality for this line. Compactness places the relevant lines in a
region with a uniform balance margin, for which we construct an
equivariant graph model.

To finish, we cut the bordism at a small positive determinant level.
Positive polar decomposition maps the resulting quotient bordism to
$\SO(5)/C_5$, and is the identity on the Euclidean boundary. The other
boundary factors through a bundle over a graph with fiber $\SO(5)$.
Since $H_9(\SO(5);\F_5)=0$, its tenth homology is generated by fiber
classes. Each fiber maps to $\SO(5)/C_5$ with degree five, so this
boundary has zero image over $\F_5$. It cannot cancel the nonzero
fundamental class of the Euclidean boundary.

After the public release of this article, Logan Chalmers brought
to the author's attention his manuscript, posted on ResearchGate
on October~26, 2026~\cite{Chalmers2026Manuscript}, and his completed
Lean formalisation of the result, available on
Zenodo~\cite{Chalmers2026Lean}. The author had been unaware of
Chalmers's work. The proof presented here was generated independently
by hyprover on September~29, 2026, before Chalmers's manuscript
became publicly available. The author makes no claim to publication
priority for this result.

Section~\ref{sec:preliminaries} recalls the transversality and homology tools.
Section~\ref{sec:configurations} carries out the smooth reduction, establishes
the constraints on six-point configurations, and constructs the bordism
and its relation-line graph. Section~\ref{sec:proof} computes the required
homology, proves the determinant-cut obstruction, and concludes the
proof of Theorem~\ref{thm:main}.

\papersection{Preliminaries}\label{sec:preliminaries}

All vector spaces are real and finite-dimensional. We fix auxiliary
Euclidean inner products and, whenever determinants are used, orientations.
The notation $|\cdot|$, orthogonal complements and adjoints refer to
these inner products. A norm is \emph{smooth} if it is $C^\infty$
away from zero, \emph{strictly convex} if its unit sphere contains no
nontrivial line segment, and \emph{strongly convex} if
$D^2(N^2)(u)$ is positive definite for $u\ne0$.
Strong convexity implies strict convexity: on a segment avoiding zero,
$N^2$ is strictly convex, and a segment through zero cannot lie in the
unit sphere.

\papersubsection{Regular fibers and transversality}

We recall the regular-value and transversality results used to construct
the configuration bordism, together with the orientation convention
needed to pass to its quotient.

For a smooth map $F:M\to W$ to a vector space, transversality to zero
means that $DF_x:T_xM\to W$ is onto at every zero; the empty zero set
is allowed~\cite[p.~143]{Lee2013}.
We use the following forms of the regular-value theorem
\cite[Chapter~1]{GuilleminPollack1974} and its boundary version
\cite[Problem~5-23]{Lee2013}.

\begin{lemma}[Regular zero sets]\label{lem:regular-fiber}
Let $M$ be a smooth $m$-manifold, possibly with boundary, and let $W$
be an $r$-dimensional vector space. Suppose $F:M\to W$ is smooth and
transverse to zero. If $M$ has boundary, suppose also that
$F|_{\partial M}$ is transverse to zero. Then $Z=F^{-1}(0)$ is a
smooth submanifold of dimension $m-r$, possibly empty, with
\[
 T_xZ=\ker DF_x,\qquad \partial Z=Z\cap\partial M.
\]
The derivative gives the exact sequence of vector bundles
\[
 0\longrightarrow TZ\longrightarrow TM|_Z
 \xrightarrow{\ DF\ }Z\times W\longrightarrow0.
\]
If $M$ and $W$ are oriented, the identification
$\det TM|_Z\cong\det TZ\otimes\det W$ orients $Z$.
If a finite group acts smoothly on $M$ and linearly on $W$, preserves
both orientations, and $F$ is equivariant, then its action on $Z$
preserves this orientation.
\end{lemma}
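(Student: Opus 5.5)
The plan is to reduce the lemma to the standard regular-value theorem and check the remaining claims directly. First, the regular-value theorem without boundary \cite[Chapter~1]{GuilleminPollack1974} applies to the interior: $DF_x$ is onto at every $x\in Z\cap\operatorname{int}M$. In local coordinates, after choosing a splitting $T_xM=\ker DF_x\oplus C$ with $DF_x|_C$ an isomorphism, the implicit function theorem writes $Z$ near $x$ as a graph over $\ker DF_x$. This gives a submanifold of dimension $m-r$ with $T_xZ=\ker DF_x$.

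Next I treat boundary points. Near $x\in Z\cap\partial M$, work in a half-space chart and extend $F$ smoothly to an open neighbourhood in $\R^m$. Surjectivity persists nearby, so the extended zero set $\tilde Z$ is a manifold of dimension $m-r$. Transversality of $F|_{\partial M}$ says $DF_x$ restricted to $T_x\partial M$ is onto. Hence $\ker DF_x\not\subset T_x\partial M$, so the boundary-defining coordinate $x_m$ restricts to $\tilde Z$ with nonzero differential at $x$. Therefore $Z=\tilde Z\cap\{x_m\ge0\}$ is a manifold with boundary $\tilde Z\cap\{x_m=0\}=Z\cap\partial M$. This is exactly \cite[Problem~5-23]{Lee2013}. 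The main point to verify is the nonvanishing of $dx_m|_{T\tilde Z}$, and I would check it by a dimension count. If $\ker DF_x\subset T_x\partial M$, then $\ker(DF_x|_{T_x\partial M})$ would have dimension $m-r$. But surjectivity on the $(m-1)$-dimensional space $T_x\partial M$ forces that kernel to have dimension $m-1-r$, which is a contradiction.

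The exact sequence is immediate: $TZ=\ker DF$ fibrewise, and $DF$ is onto $W$ along $Z$. These are smooth bundles because $DF$ has constant rank along $Z$. A short exact sequence $0\to A\to B\to C\to0$ gives a canonical isomorphism $\det B\cong\det A\otimes\det C$. Using the trivial oriented bundle $Z\times W$, an orientation of $TM|_Z$ therefore determines one of $TZ$. Concretely, a basis $(v_1,\dots,v_{m-r})$ of $T_xZ$ is positive when $(v_1,\dots,v_{m-r},w_1,\dots,w_r)$ is positive in $T_xM$ for any lifts $w_i$ of a positive basis of $W$. This is well defined because changing the lifts by vectors in $T_xZ$ does not change the orientation class. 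It varies continuously with $x$, hence is a smooth orientation.

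For equivariance, let $g$ act on $M$ by a diffeomorphism $\phi_g$ and on $W$ by a linear map $L_g$, with $F\circ\phi_g=L_g\circ F$. Then $\phi_g$ preserves $Z$. Differentiating gives $DF\circ D\phi_g=L_g\circ DF$, so $D\phi_g$ maps $\ker DF_x$ to $\ker DF_{gx}$ and maps lifts of a basis of $W$ to lifts of its image under $L_g$. By hypothesis, $D\phi_g$ preserves the orientation of $M$ and $\det L_g>0$. So a positive adapted basis at $x$ is carried to a positive adapted basis at $gx$, and $\phi_g|_Z$ preserves the orientation. Finite order of the group is not even needed here.
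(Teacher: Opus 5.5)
Your proposal is correct and follows the same route as the paper. The paper cites the regular-value theorem and its boundary version (Lee, Problem~5-23), uses the preimage orientation, and gets equivariance by taking determinant lines in the exact sequence; your half-space extension with the dimension count and your adapted-basis check for $D\phi_g$ are exactly these steps written out, and your tangent-first ordering matches the paper's $\det TZ\otimes\det W$ convention.
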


The orientation is the preimage orientation~\cite[Chapter~15]{Lee2013}.
The equivariant assertion follows by taking determinant lines in the
exact sequence. Boundary orientations are taken with the outward
normal first.

Sard's theorem~\cite[Theorem~6.10]{Lee2013} supplies the regular values used below.

\begin{lemma}[Sard's theorem]\label{lem:sard}
Let $F:M\to N$ be a smooth map between finite-dimensional smooth
manifolds without boundary. The critical values of $F$ have measure
zero in every coordinate chart of $N$. Thus almost every $y\in N$ is
a regular value, meaning that $DF_x:T_xM\to T_yN$ is onto for every
$x\in F^{-1}(y)$. In particular, for a smooth real-valued function on
$M$, every nonempty open interval contains a regular value.
\end{lemma}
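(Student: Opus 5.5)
The statement is the classical Sard theorem, cited from \cite[Theorem~6.10]{Lee2013}, so I will not reprove it; the plan is to reduce the two formulations stated here to the standard measure-zero assertion. The only thing to check is that the phrasing in Lemma~\ref{lem:sard} matches the textbook version and that the corollaries follow.

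First, I invoke the standard result: for a smooth map $F:M\to N$ between manifolds without boundary, the set of critical values has measure zero in $N$. Here a subset of $N$ has measure zero if its image in every smooth chart has Lebesgue measure zero. The proof in the cited source proceeds in three steps:
\begin{enumerate}[label=(\roman*)]
\item reduce to maps between open subsets of Euclidean spaces using second countability;
\item handle the case $\dim M<\dim N$, where every point is critical and the image has measure zero by a Lipschitz/cube-covering estimate;
\item in general, stratify the critical set by vanishing orders of derivatives and use induction on $\dim M$, with Fubini's theorem.
\end{enumerate}
I take this as given.

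Second, I deduce ``almost every $y$ is a regular value.'' A point $y$ is regular precisely when it is not the image of a critical point; this holds vacuously if $F^{-1}(y)$ is empty. So the regular values are the complement of a measure-zero set, which is the stated claim.

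Third, for the real-valued case $N=\R$: a nonempty open interval has positive Lebesgue measure, while the critical values form a null set. Hence the interval is not contained in the set of critical values and contains a regular value.

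The only mild point of care is that ``measure zero'' is chart-independent. This holds because diffeomorphisms between open subsets of $\R^n$ preserve null sets, being locally Lipschitz. It is needed so that the statement ``in every coordinate chart'' is consistent, and it is the sole step I would spell out beyond the citation. No genuine obstacle is expected.
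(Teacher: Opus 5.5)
Your proposal is correct and matches the paper, which gives no independent proof: it cites Theorem~6.10 of Lee's book and reads off the regular-value and open-interval consequences exactly as you do. Your extra remark on chart-independence is harmless but not strictly needed, since the definition of measure zero used there already quantifies over every smooth chart.
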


\begin{lemma}[Parameter transversality]\label{lem:parameter}
Let $M$ be a smooth manifold without boundary, $P\subset\R^L$ open,
$W$ a finite-dimensional vector space, and $\Phi:M\times P\to W$ smooth.
If $\Phi$ is transverse to zero,
then $\Phi_a(x)=\Phi(x,a)$ is transverse to zero for almost every
$a\in P$~\cite[Theorem~6.35 and errata to p.~146]{Lee2013}.
\end{lemma}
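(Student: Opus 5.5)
The plan is to deduce Lemma~\ref{lem:parameter} from Sard's theorem (Lemma~\ref{lem:sard}) applied to the zero set of $\Phi$, following the standard parametric transversality argument. Since $\Phi$ is transverse to zero, Lemma~\ref{lem:regular-fiber} (with $M\times P$ in place of $M$, no boundary) shows that $Z=\Phi^{-1}(0)\subset M\times P$ is a smooth submanifold of dimension $\dim M+L-r$, where $r=\dim W$, and that $T_{(x,a)}Z=\ker D\Phi_{(x,a)}$. Let $\pi:Z\to P$ be the restriction of the projection $M\times P\to P$. This is a smooth map between manifolds without boundary. Sard's theorem then says that almost every $a\in P$ is a regular value of $\pi$.

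The key step is the linear-algebra claim: if $a$ is a regular value of $\pi$, then $\Phi_a$ is transverse to zero. Fix $x$ with $\Phi_a(x)=0$, so that $(x,a)\in Z$, and take $w\in W$. Since $D\Phi_{(x,a)}$ is onto, choose $(\xi,\alpha)\in T_xM\times\R^L$ with $D\Phi_{(x,a)}(\xi,\alpha)=w$. Since $d\pi_{(x,a)}$ is onto $\R^L$, choose $(\eta,\alpha)\in T_{(x,a)}Z=\ker D\Phi_{(x,a)}$ with the same second component $\alpha$. Then $(\xi-\eta,0)$ is mapped by $D\Phi_{(x,a)}$ to $w-0=w$. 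Its value is exactly $D(\Phi_a)_x(\xi-\eta)$, so $D(\Phi_a)_x$ is onto. This works for every zero of $\Phi_a$. If $\Phi_a$ has no zeros, the condition holds vacuously. The same holds when $Z$ is empty or has dimension smaller than $L$: in that case every point of $\pi(Z)$ is critical, and $\pi(Z)$ has measure zero.

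The only delicate point is bookkeeping rather than substance. Sard's theorem requires second countability, which holds because $Z$ is an embedded submanifold of the second-countable manifold $M\times P$. We must also make sure that "regular value" is understood to include values outside the image, which matches the formulation in Lemma~\ref{lem:sard}. "Almost every" refers to Lebesgue measure on the open set $P\subset\R^L$, which is the coordinate-chart measure zero in Lemma~\ref{lem:sard}. I expect the surjectivity argument in the second paragraph to be the heart of the proof, and it is routine once the correct $\alpha$ is matched. No step seems to be a genuine obstacle.
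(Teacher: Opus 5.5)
Your proposal is correct. Applying Sard's theorem to the projection $\pi\colon\Phi^{-1}(0)\to P$, and then using surjectivity of $d\pi$ to cancel the parameter component $\alpha$, is the standard proof of the parametric transversality theorem (Lee, Theorem~6.35) in the special case of the submanifold $\{0\}\subset W$. The paper gives no proof of its own and only cites that theorem, so your argument is the one the citation relies on.
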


Smooth cutoffs will be used to keep endpoint neighborhoods fixed and
to localize perturbations. If $K$ is compact in a smooth manifold and
$K\subset U$ with $U$ open, there is a smooth function $\chi$ with
$0\leq\chi\leq1$, equal to one near $K$, whose support is a compact
subset of $U$~\cite[Chapter~2]{Lee2013}.
A \emph{bordism} between two closed smooth $k$-manifolds is a compact
smooth $(k+1)$-manifold whose boundary is diffeomorphic to their disjoint
union; here closed means compact without boundary.

\papersubsection{Homology of pairs, products and quotients}

We collect the exact sequences and product formulas used in the
homology calculation for bundles over the relation graph.

All homology groups have coefficients in $\F_5=\mathbb Z/5\mathbb Z$;
we suppress these coefficients in this subsection. We write $f_*$ for
the homomorphism induced by $f$, and $\widetilde H_*$ for reduced
homology. For a nonempty space, $\widetilde H_j=H_j$ for $j>0$ and
$\widetilde H_0$ is the kernel of the augmentation $H_0\to\F_5$.
The next lemma collects the forms of homotopy invariance, exactness
and excision that we use~\cite[Theorem~2.10 and Section~2.1]{Hatcher2002},
together with Mayer--Vietoris~\cite[Section~2.2, pp.~149--150]{Hatcher2002}.

\begin{lemma}\label{lem:homology-tools}
Homology with coefficients in $\F_5$ has the following homotopy,
exactness and excision properties.
\begin{enumerate}[label=\textup{(\roman*)}]
\item If $f,g:(X,A)\to(Y,B)$ are homotopic through maps of pairs,
then $f_*=g_*$ on relative homology. Consequently a homotopy
equivalence of pairs induces isomorphisms in all degrees. The same
statements hold for absolute homology by taking $A=B=\varnothing$.
\item For every pair $A\subset X$, there is a natural long exact sequence
\begin{equation}\label{eq:pair-sequence}
 \cdots\longrightarrow H_j(A)\xrightarrow{i_*}H_j(X)
 \xrightarrow{j_*}H_j(X,A)\xrightarrow{\partial}H_{j-1}(A)
 \longrightarrow\cdots.
\end{equation}
Here $i_*$ is induced by inclusion, $j_*$ passes to relative classes,
and $\partial$ takes a relative cycle to the class of its boundary
in $A$. In particular, if $H_j(X,A)=0$, then $H_j(A)\to H_j(X)$ is onto.
\item If $Z\subset A\subset X$ and $\overline Z\subset\operatorname{int}_X A$,
the inclusion of pairs induces an isomorphism
\[
 H_j(X\setminus Z,A\setminus Z)\xrightarrow{\ \cong\ }H_j(X,A)
\]
for every $j$.
\item If $X=U\cup V$ with $U,V$ open in $X$, there is a natural
Mayer--Vietoris exact sequence
\begin{equation}\label{eq:mv-sequence}
 \cdots\longrightarrow H_j(U\cap V)\xrightarrow{\alpha}
 H_j(U)\oplus H_j(V)\xrightarrow{\beta}H_j(X)
 \xrightarrow{\partial}H_{j-1}(U\cap V)\longrightarrow\cdots.
\end{equation}
With $i_U,i_V$ denoting inclusions from $U\cap V$ and $j_U,j_V$
inclusions into $X$, the maps are
\[
 \alpha(a)=\bigl((i_U)_*a,-(i_V)_*a\bigr),\qquad
 \beta(b,c)=(j_U)_*b+(j_V)_*c.
\]
\end{enumerate}
\end{lemma}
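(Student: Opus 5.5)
These four properties are the standard Eilenberg--Steenrod consequences for singular homology, and the plan is to deduce each from its integral counterpart in \cite{Hatcher2002}, using only that $\F_5$ is a field. First I will fix singular chains with coefficients $C_*(X;\F_5)=C_*(X)\otimes\F_5$, and relative chains $C_*(X,A;\F_5)=C_*(X;\F_5)/C_*(A;\F_5)$. The key point is that every construction in Hatcher's proofs is natural at the chain level: the prism operator, the subdivision operator, and the short exact sequences of chain complexes. Tensoring with $\F_5$ over $\mathbb Z$ preserves all of these.

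For (i), I take the prism operator $P$ with $\partial P+P\partial=g_\#-f_\#$, as in \cite[Theorem~2.10]{Hatcher2002}. A homotopy through maps of pairs carries $C_*(A)$ into $C_*(B)$, so $P$ descends to relative chains and gives a chain homotopy there; tensoring with $\F_5$ preserves the identity. A homotopy equivalence of pairs then has an inverse up to homotopy of pairs, and functoriality gives isomorphisms. Taking $A=B=\varnothing$ gives the absolute statement.

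For (ii), the sequence $0\to C_*(A)\to C_*(X)\to C_*(X,A)\to0$ is a split exact sequence of free abelian groups in each degree. Hence it stays exact after $\otimes\F_5$. The snake lemma then yields the long exact sequence, with the connecting map given by taking the boundary of a relative cycle, and the construction is natural. The ``in particular'' follows directly from exactness at $H_j(X)$: the image of $H_j(A)$ is the kernel of $H_j(X)\to H_j(X,A)=0$.

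For (iii) and (iv), the only nontrivial step, and the main obstacle, is that the inclusion $C^{\mathcal U}_*(X)\to C_*(X)$ of $\mathcal U$-small chains is a chain homotopy equivalence. Hatcher proves this via barycentric subdivision, with an explicit homotopy $D$ \cite[Proposition~2.21]{Hatcher2002}. Because the statement is a chain homotopy equivalence and not merely a quasi-isomorphism, it survives $\otimes\F_5$, and it passes to the relative complexes.

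Excision then follows from $\mathcal U=\{X\setminus Z,\operatorname{int}A\}$, which covers $X$ because $\overline Z\subset\operatorname{int}A$. The quotient $C^{\mathcal U}_*(X)/C^{\mathcal U}_*(A)$ equals $C_*(X\setminus Z)/C_*(A\setminus Z)$, since a small chain either lies in $X\setminus Z$ or in $A$; this also holds with $\F_5$ coefficients.

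Mayer--Vietoris comes from the short exact sequence $0\to C_*(U\cap V)\to C_*(U)\oplus C_*(V)\to C^{\mathcal U}_*(X)\to0$ with $\mathcal U=\{U,V\}$. This sequence is degreewise split, being a sequence of free abelian groups, so it remains exact after tensoring with $\F_5$. The maps $\alpha(a)=(a,-a)$ and $\beta(b,c)=b+c$ are exactly the ones stated. Combining this with the small-chain equivalence and the snake lemma gives the sequence. Alternatively, one may simply invoke the universal coefficient framework of Hatcher's Section~2.2, which treats arbitrary coefficient groups verbatim.
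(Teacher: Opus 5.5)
Your proposal is correct and follows the paper's route. The paper simply cites the standard singular-homology results in Hatcher (Theorem~2.10 and Sections~2.1--2.2), which carry over verbatim to field coefficients, and your chain-level remarks make that transfer explicit: the prism operator, the degreewise-split short exact sequences, and subdivision as a chain homotopy equivalence. One wording fix in (iii): it is the interiors of the two sets, namely $X\setminus\overline Z$ and $\operatorname{int}A$, that cover $X$, and this is exactly where the hypothesis $\overline Z\subset\operatorname{int}_X A$ is used.
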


A pair $(X,A)$ is called \emph{good} if $A$ is a nonempty closed
subspace that is a deformation retract of an open neighborhood in $X$.
A CW pair is a CW complex together with a subcomplex.
The quotient and wedge formulas below hold for good pairs
\cite[Section~2.1, pp.~124--126]{Hatcher2002}; the product formulas
are the field-coefficient and relative K\"unneth formulas
\cite[Section~3.B, p.~276]{Hatcher2002}.

\begin{lemma}\label{lem:products-quotients}
The following quotient and product formulas hold for homology with
coefficients in $\F_5$.
\begin{enumerate}[label=\textup{(\roman*)}]
\item For a good pair $(X,A)$, the quotient map induces isomorphisms
\begin{equation}\label{eq:good-quotient}
 H_j(X,A)\cong\widetilde H_j(X/A),\qquad j\geq0.
\end{equation}
If finitely many based spaces $(X_\alpha,x_\alpha)$ are good pairs,
the inclusions into their wedge induce
\begin{equation}\label{eq:wedge-homology}
 \bigoplus_\alpha\widetilde H_j(X_\alpha)
 \cong\widetilde H_j\Bigl(\bigvee_\alpha X_\alpha\Bigr).
\end{equation}
\item For CW complexes $X,Y$, the cross product induces a natural
isomorphism
\begin{equation}\label{eq:kunneth}
 \bigoplus_{p+q=j}H_p(X)\otimes_{\F_5}H_q(Y)
 \xrightarrow{\ \cong\ }H_j(X\times Y).
\end{equation}
For CW pairs $(X,A)$ and $(Y,B)$, its relative form is
\begin{equation}\label{eq:relative-kunneth}
 \bigoplus_{p+q=j}H_p(X,A)\otimes_{\F_5}H_q(Y,B)
 \cong H_j\bigl(X\times Y,A\times Y\cup X\times B\bigr).
\end{equation}
These formulas also hold for spaces and pairs homotopy equivalent
to CW complexes and CW pairs, respectively.
\item In particular, if $X$ has CW homotopy type, $k\geq1$ and
$I=[0,1]$, then
\begin{align}
 H_j(X\times S^k)&\cong H_j(X)\oplus H_{j-k}(X),\label{eq:sphere-product}\\
 H_j\bigl(X\times I,X\times\partial I\bigr)&\cong H_{j-1}(X),
 \qquad j\geq1.\label{eq:interval-product}
\end{align}
Groups of negative degree are understood to be zero.
\end{enumerate}
\end{lemma}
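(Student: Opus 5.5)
The first two parts of the lemma are the standard facts cited just before it, so the plan is to quote them with their hypotheses checked. After that, derive part (iii) from them using only the tools of Lemma~\ref{lem:homology-tools}.

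For (i), quote \cite[Proposition~2.22]{Hatcher2002}. It says the quotient map $q:(X,A)\to(X/A,A/A)$ induces isomorphisms on relative homology whenever $A$ is a nonempty closed deformation retract of a neighborhood. Then apply the long exact sequence \eqref{eq:pair-sequence} of the pair $(X/A,\mathrm{pt})$, which identifies $H_j(X/A,\mathrm{pt})$ with $\widetilde H_j(X/A)$. The wedge formula \eqref{eq:wedge-homology} follows by applying \eqref{eq:good-quotient} to the good pair $(\bigsqcup_\alpha X_\alpha,\{x_\alpha\}_\alpha)$. Here the relative homology of a disjoint union is the direct sum, by additivity of singular chains. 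For (ii), the absolute K\"unneth formula over a field is \cite[Theorem~3B.6]{Hatcher2002}; since $\F_5$ is a field, the Tor terms vanish. The relative form for CW pairs is the relative version stated on \cite[p.~276]{Hatcher2002}. The extension to spaces of CW homotopy type follows from Lemma~\ref{lem:homology-tools}(i) and naturality of the cross product under homotopy equivalences of pairs.

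For (iii), first compute $H_q(S^k)$: it is $\F_5$ for $q=0$ and $q=k$, and zero otherwise. Substituting this into \eqref{eq:kunneth} gives \eqref{eq:sphere-product} directly. For \eqref{eq:interval-product}, use the CW pair $(I,\partial I)$, which has one $1$-cell attached to two $0$-cells. Its relative homology is $\F_5$ in degree one and zero elsewhere. This can be read off from \eqref{eq:pair-sequence}: $H_1(I)=0$, and the map $H_0(\partial I)=\F_5^2\to H_0(I)=\F_5$ is the sum map, whose kernel is $\F_5$. Now apply \eqref{eq:relative-kunneth} with the pairs $(X,\varnothing)$ and $(I,\partial I)$. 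The union $\varnothing\times I\cup X\times\partial I$ equals $X\times\partial I$, so the formula gives $H_j(X\times I,X\times\partial I)\cong H_{j-1}(X)\otimes H_1(I,\partial I)\cong H_{j-1}(X)$.

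The main obstacle is the relative K\"unneth formula when one of the subspaces is empty. Hatcher's statement is phrased for CW pairs, and the paper's definition of a good pair requires $A$ nonempty. So I would check that $(X,\varnothing)$ is admissible as a CW pair, where the empty set is a subcomplex. If that convention is uncomfortable, there is a fallback. Use the exact sequence of the pair $(X\times I,X\times\partial I)$, with $H_j(X\times I)\cong H_j(X)$ by homotopy invariance and $H_j(X\times\partial I)\cong H_j(X)^2$. The map $H_j(X)^2\to H_j(X)$ is the sum, so it is surjective, and exactness then gives the isomorphism with $H_{j-1}(X)$ for $j\ge1$.
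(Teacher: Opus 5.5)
Your proposal is correct and follows the paper's route: (i) and (ii) are quoted from Hatcher, namely the good-pair quotient theorem (with the wedge case obtained from the disjoint union, as Hatcher does) and the field and relative K\"unneth formulas of Section~3.B, and (iii) comes from inserting $H_*(S^k)$ and $H_*(I,\partial I)$ into \eqref{eq:kunneth} and \eqref{eq:relative-kunneth}. Your worry about $(X,\varnothing)$ is unnecessary, since the empty set is a subcomplex of any CW complex and the good-pair nonemptiness condition plays no role in the K\"unneth formulas; your pair-sequence fallback is also valid, and it even removes the CW hypothesis for \eqref{eq:interval-product}.
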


The special cases~\eqref{eq:sphere-product} and~\eqref{eq:interval-product}
follow from~\eqref{eq:kunneth} and~\eqref{eq:relative-kunneth}.
They use $H_0(S^k)=H_k(S^k)=\F_5$ and zero in all other
degrees, together with $H_1(I,\partial I)=\F_5$ and zero in all
other degrees~\cite[Section~2.1]{Hatcher2002}.
Compact manifolds have CW homotopy type~\cite[Corollary~A.12]{Hatcher2002},
so these formulas apply to the compact Lie groups used below.

\papersubsection{Fundamental classes and finite coverings}

We record how fundamental classes behave under boundaries and finite
coverings. These identities provide the cancellation used in the
proof of the main theorem.

We restore the coefficient notation in the formulas below. A connected
closed oriented $d$-manifold has a fundamental class $[M]$, characterized
by the specified local orientation at each point, and
\[
 H_d(M;\F_5)=\F_5[M],\qquad H_j(M;\F_5)=0,\quad j>d
\]
by~\cite[Theorem~3.26]{Hatcher2002}. In particular, $[M]\ne0$.
For a compact oriented manifold $Y$ with boundary, the relative
fundamental class satisfies
\[
 \partial[Y,\partial Y]=[\partial Y]
\]
under the connecting homomorphism, with the induced boundary
orientation~\cite[Section~3.3, pp.~253--254]{Hatcher2002}.
For disconnected manifolds, these classes mean sums over their
oriented components.

\begin{lemma}\label{lem:boundary-cover}
Let $Y$ be a compact oriented $(d+1)$-manifold and $f:Y\to M$ continuous.
Then
\begin{equation}\label{eq:boundary}
 \bigl(f|_{\partial Y}\bigr)_*[\partial Y]=0\quad\text{in }H_d(M;\F_5).
\end{equation}
If $q:\widetilde M\to M$ is an orientation-preserving $r$-sheeted
covering of connected closed oriented $d$-manifolds, then
$q_*[\widetilde M]=r[M]$.
For a finite regular covering of path-connected, locally path-connected
spaces with deck group $G$,
there is a transfer
$\operatorname{tr}:H_j(M;\F_5)\to H_j(\widetilde M;\F_5)$ satisfying
\begin{equation}\label{eq:transfer}
 q_*\operatorname{tr}=|G|\,\mathrm{id},\qquad
 \operatorname{tr}q_*=\sum_{g\in G}g_*.
\end{equation}
If $|G|$ is invertible in $\F_5$ and every deck transformation acts
trivially on homology, then $q_*$ is an isomorphism with inverse
$|G|^{-1}\operatorname{tr}$.
\end{lemma}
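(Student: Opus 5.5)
The lemma has three parts, and I will treat each as an instance of standard homological facts, citing Hatcher as the paper does.

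\emph{Boundary identity \eqref{eq:boundary}.} Write $i:\partial Y\to Y$ for the inclusion. The long exact sequence \eqref{eq:pair-sequence} of $(Y,\partial Y)$ gives $i_*\circ\partial=0$. Since $\partial[Y,\partial Y]=[\partial Y]$, we get $i_*[\partial Y]=0$ in $H_d(Y;\F_5)$. Applying $f_*$ gives $(f|_{\partial Y})_*[\partial Y]=f_*i_*[\partial Y]=0$. If $Y$ is disconnected, I sum this over components, using the convention that fundamental classes of disconnected manifolds are sums over oriented components.

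\emph{Covering degree.} Since $H_d(M;\F_5)=\F_5[M]$, we have $q_*[\widetilde M]=c[M]$ for some $c\in\F_5$. I would compute $c$ by local degree. Pick $x\in M$ with preimages $\tilde x_1,\ldots,\tilde x_r$, and an evenly covered ball $B\ni x$. Consider the commutative square with $H_d(\widetilde M)\to\bigoplus_k H_d(\widetilde M,\widetilde M\setminus\tilde x_k)$ and $H_d(M)\to H_d(M,M\setminus x)$. By excision to the sheets over $B$, each local map $H_d(\widetilde M,\widetilde M\setminus\tilde x_k)\to H_d(M,M\setminus x)$ sends the local orientation to the local orientation, since $q$ is an orientation-preserving diffeomorphism on each sheet. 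The image of $[\widetilde M]$ in $H_d(M,M\setminus x)$ is therefore $r$ times the local generator $\mu_x$. On the other hand, $[M]\mapsto\mu_x$. Hence $c=r$. This is essentially the argument for the degree formula as a sum of local degrees in Hatcher, Section~2.2.

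\emph{Transfer.} For a finite regular covering, define $\operatorname{tr}$ on singular chains by sending a simplex $\sigma:\Delta^j\to M$ to the sum of its $|G|$ lifts. Lifts exist and are unique given a starting point because $\Delta^j$ is simply connected and locally path connected, and the lifts form a $G$-torsor. This map commutes with the boundary, so it induces $\operatorname{tr}$ on homology. At chain level, $q_\sharp\operatorname{tr}(\sigma)=|G|\sigma$. Also $\operatorname{tr}q_\sharp(\tilde\sigma)=\sum_g g\tilde\sigma$, because the lifts of $q\tilde\sigma$ are exactly the translates $g\tilde\sigma$. Both identities in \eqref{eq:transfer} follow.

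For the final claim, suppose $|G|$ is a unit in $\F_5$ and each $g_*=\mathrm{id}$. Then $\operatorname{tr}q_*=|G|\,\mathrm{id}$, so $|G|^{-1}\operatorname{tr}$ is a two-sided inverse of $q_*$.

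The only step needing care is the local orientation bookkeeping in the covering-degree part. Even there, orientation-preservation on sheets makes each local contribution $+1$, so I expect no real obstacle.
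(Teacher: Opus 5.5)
Your proposal is correct and follows essentially the same route as the paper. The pair sequence kills $[\partial Y]$ in $H_d(Y;\F_5)$ before you apply $f_*$, the covering degree is the count of $r$ positively oriented local sheets, and the transfer sums the $|G|$ lifts of each simplex, giving both transfer identities at chain level. You simply spell out what the paper compresses into one line each, notably the local-degree square, where the maps $H_d(\widetilde M,\widetilde M\setminus\tilde x_k)\to H_d(M,M\setminus x)$ are only defined after excising to the sheets over $B$, as you indicate.
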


\begin{proof}
By~\eqref{eq:pair-sequence}, the class $[\partial Y]$ vanishes in
$H_d(Y;\F_5)$; applying $f_*$ proves~\eqref{eq:boundary}.
An orientation-preserving $r$-sheeted covering has $r$ positive local
sheets, so its degree is $r$.
The transfer~\cite[Section~3.G]{Hatcher2002} sums all lifts of each
singular simplex and commutes with the boundary operator.
Projection gives $|G|$ copies of that simplex; in the other order,
the lifts are its deck translates. This proves~\eqref{eq:transfer},
and the last assertion follows by dividing both identities by $|G|$.
\end{proof}

\begin{lemma}[Finite quotients]\label{lem:finite-quotient}
Let a finite group $G$ act smoothly and freely on a smooth manifold
$Y$, possibly disconnected and with boundary. Then $Y/G$ has a smooth
manifold structure for which the quotient map is a $|G|$-sheeted
smooth covering, and
\[
 \partial(Y/G)=(\partial Y)/G.
\]
If $Y$ is oriented and the action preserves orientation, the quotient
has the unique orientation for which the covering is locally
orientation preserving. The displayed boundary identification then
preserves the induced orientations.
\end{lemma}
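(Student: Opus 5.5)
The plan is to build the smooth structure on $Y/G$ from local slices of the action, and then read off the boundary and the orientation from the fact that the quotient map is a local diffeomorphism.

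\textbf{Step 1: slices.} Since $G$ is finite and acts freely, each $y\in Y$ has an open neighborhood $U$ whose translates $gU$, $g\in G$, are pairwise disjoint. To get $U$, note that the points $gy$ are distinct. A manifold with boundary is Hausdorff, so I choose pairwise disjoint neighborhoods $V_g$ of the points $gy$. Then I set $U=\bigcap_g g^{-1}V_g$. Shrinking $U$ further, I may take it to be a chart domain.

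\textbf{Step 2: covering and smooth structure.} On the slice $U$, the quotient map $\pi$ restricts to a homeomorphism onto the open set $\pi(U)$. It is open because $\pi^{-1}(\pi(U))=\bigcup_g gU$. So $\pi$ is a $|G|$-sheeted covering map. The quotient $Y/G$ is Hausdorff: the orbits are finite, hence compact, and disjoint orbits can be separated by $G$-saturated open sets. It is second countable because $\pi$ is open and $Y$ is second countable. I take as charts the compositions $\varphi\circ(\pi|_U)^{-1}$, where $\varphi$ is a chart on a slice $U$. On an overlap of two such charts, the transition map is locally of the form $\varphi'\circ g\circ\varphi^{-1}$, which is smooth because each $g$ acts by a diffeomorphism. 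Hence these charts form a smooth atlas, possibly with boundary charts, and $\pi$ is a local diffeomorphism.

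\textbf{Step 3: boundary.} A local diffeomorphism carries interior points to interior points and boundary points to boundary points, by invariance of the boundary. Diffeomorphisms preserve $\partial Y$, so $\partial Y$ is $G$-invariant. Therefore $\partial(Y/G)=\pi(\partial Y)=(\partial Y)/G$. The restriction of $\pi$ to $\partial Y$ is again a free quotient, so this identification is one of manifolds.

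\textbf{Step 4: orientation.} At $\pi(y)$, I transport the orientation of $T_yY$ through $d\pi_y$. The choice of $y$ in the fibre does not matter: any other point is $gy$, and $\pi\circ g=\pi$ gives $d\pi_{gy}\circ dg_y=d\pi_y$, while $g$ preserves orientation. The orientations obtained this way are locally constant, since on a slice they come from the orientation of $U$. Uniqueness follows because the condition fixes the orientation at every point. Finally, $\pi$ maps outward normals to outward normals, since it is a local diffeomorphism of manifolds with boundary. So the boundary orientation of $(\partial Y)/G$ induced from $Y/G$ agrees with the one obtained by pushing down the boundary orientation of $\partial Y$.

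The main obstacle is the point-set part of Step 2, namely Hausdorffness together with the overlap compatibility. Finiteness of $G$ handles Hausdorffness directly; for compatibility the argument is the one just given.
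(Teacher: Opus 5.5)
Your proposal is correct and follows the same route as the paper: coordinate neighborhoods disjoint from their nontrivial translates, using half-space charts at boundary points, project to charts on $Y/G$, and these give the covering, the boundary identification and the descended orientation. The difference is only one of detail: you write out the point-set steps (Hausdorffness, second countability, transition maps of the form $\varphi'\circ g\circ\varphi^{-1}$), which the paper handles by citing Lee's covering-space quotient theorem and a short sketch.
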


For connected manifolds without boundary this is the covering-space
quotient theorem~\cite[Theorem~21.13]{Lee2013}.
In general, choose coordinate neighborhoods disjoint from their
nontrivial translates, using half-space charts at boundary points.
Their projections give the quotient charts and the asserted boundary
identification; the orientations descend through the same charts.

\papersection{Equilateral configurations and relation lines}
\label{sec:configurations}

We first reduce to smooth strongly convex norms and establish the
geometric constraints on six-point configurations. These facts allow us
to construct a bordism along a path of norms and a graph model for its
relation lines, retaining the label action throughout.

\papersubsection{Reduction to smooth strongly convex norms}

We show that a counterexample, if one exists, can be chosen smooth and
strongly convex. We then establish the stability needed to perturb a
path of such norms.

We give the space of norms the topology of uniform convergence on a
fixed Euclidean unit sphere. Near each fixed norm $N$, this is equivalent
to multiplicative convergence. To see this,
write $x=\sum x_ie_i$; the triangle inequality gives
$N(x)\leq\sum |x_i|N(e_i)\leq C|x|$, so $N$ is continuous.
Its minimum $a$ on the Euclidean unit sphere is positive. If
$\sup_{|u|=1}|M(u)-N(u)|\leq\varepsilon a$, homogeneity gives
$(1-\varepsilon)N\leq M\leq(1+\varepsilon)N$. The converse follows
from the boundedness of $N$ on that sphere.

\begin{lemma}\label{lem:closed}
The norms on $\R^5$ admitting a six-point equilateral set form a closed
set.
\end{lemma}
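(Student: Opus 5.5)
We argue by sequential compactness. The topology on norms is metrizable (uniform distance on the Euclidean unit sphere), so it suffices to take norms $N_k\to N$ such that each $N_k$ admits a six-point equilateral set, and to show that $N$ admits one. By translation and scaling we normalize each configuration, as in Theorem~\ref{thm:main}: we take $x^{(k)}_0=0$ and $N_k(x^{(k)}_i-x^{(k)}_j)=1$ for $0\leq i<j\leq5$.

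\emph{Step 1: uniform bounds.} Fix $k$ large so that $(1-\varepsilon)N\leq N_k\leq(1+\varepsilon)N$ with, say, $\varepsilon=1/2$. This follows from the multiplicative equivalence just established. Since $N\geq a|\cdot|$ with $a>0$ the minimum of $N$ on the Euclidean unit sphere, we get
\[
|x^{(k)}_i|=|x^{(k)}_i-x^{(k)}_0|\leq a^{-1}N(x^{(k)}_i)\leq a^{-1}(1-\varepsilon)^{-1}N_k(x^{(k)}_i)=\frac{1}{a(1-\varepsilon)}.
\]
Hence the configurations $(x^{(k)}_0,\ldots,x^{(k)}_5)$ lie in a compact subset of $(\R^5)^6$.

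\emph{Step 2: pass to the limit.} Extract a subsequence converging to $(x_0,\ldots,x_5)$, with $x_0=0$. For each pair $i<j$ write $v_k=x^{(k)}_i-x^{(k)}_j\to v=x_i-x_j$. Then
\[
|N(v)-1|\leq|N(v)-N(v_k)|+|N(v_k)-N_k(v_k)|.
\]
The first term tends to zero because $N$ is continuous; indeed $N(x)\leq C|x|$ gives $|N(v)-N(v_k)|\leq C|v-v_k|$. The second term is at most $\sup_{|u|=1}|N(u)-N_k(u)|\cdot|v_k|$, which tends to zero by uniform convergence together with the bound on $|v_k|$ from Step 1. Thus $N(x_i-x_j)=1$ for all $i\neq j$.

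\emph{Step 3: the limit points are distinct.} This follows at once from Step 2: $N(x_i-x_j)=1\neq0$ forces $x_i\neq x_j$. So $\{x_0,\ldots,x_5\}$ is a genuine six-point equilateral set for $N$, with $\lambda=1$.

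The only point needing care is the uniformity in Step 2. The configurations must be bounded before uniform convergence on the sphere can be transferred to the difference vectors, and Step 1 supplies this bound. Degeneration of the six points to fewer is the usual worry in such limit arguments, but it cannot occur here because the normalization fixes every pairwise distance at $1$ rather than only a diameter.
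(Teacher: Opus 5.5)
Your proof is correct and follows essentially the same route as the paper: normalize so that $x_0^k=0$, use uniform comparison constants to bound the configurations, extract a convergent subsequence, and pass to the limit in the distance equations. The only cosmetic difference is that the paper obtains distinctness from a uniform lower bound $|x_i^k-x_j^k|\geq b^{-1}$, whereas you read it off directly from $N(x_i-x_j)=1$. Also, the phrase ``fix $k$ large'' in your Step 1 should read ``for all sufficiently large $k$''.
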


\begin{proof}
Let $N_k\to N$, and translate and scale an $N_k$-equilateral set to
obtain $x_0^k=0$ and $N_k(x_i^k-x_j^k)=1$ for $i\ne j$.
There are constants $a,b>0$ such that
$a|v|\leq N_k(v)\leq b|v|$ for all sufficiently large $k$.
Consequently $|x_i^k|\leq a^{-1}$ and
$|x_i^k-x_j^k|\geq b^{-1}$. Passing to a subsequence gives limits
$x_i$ with $N(x_i-x_j)=1$. They are distinct and form the required
equilateral set.
\end{proof}

\begin{lemma}\label{lem:smoothing}
Every norm on $\R^5$ is a limit of smooth strongly convex norms.
Consequently, if Theorem~\ref{thm:main} fails, it fails for such a norm.
\end{lemma}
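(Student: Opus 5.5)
We approximate an arbitrary norm $N$ on $\R^5$ in three steps: mollify, add a small Euclidean term, and renormalize through the unit ball. First, since $N$ is convex, even and $1$-homogeneous, convolution with a smooth compactly supported even mollifier $\rho_\delta$ on $\R^5$ gives a smooth convex even function $N_\delta=N*\rho_\delta$. It satisfies $|N_\delta-N|\le C\delta$ uniformly, because $N$ is Lipschitz with respect to $|\cdot|$. However, $N_\delta$ is not homogeneous. To restore homogeneity, we take the convex body
\[
K_\delta=\{x: N_\delta(x)\le1\}
\]
and let $M_\delta$ be its gauge (Minkowski functional). $K_\delta$ is convex and symmetric, and it contains a neighbourhood of $0$ once $\delta$ is small, since $N_\delta(0)\le C\delta<1$.

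Second, we check that $M_\delta$ is smooth away from zero. The boundary $\partial K_\delta$ is the level set $\{N_\delta=1\}$, and it is a smooth hypersurface by the implicit function theorem. The key inequality is the radial derivative estimate $\langle\nabla N_\delta(x),x\rangle\ge N_\delta(x)-N_\delta(0)\ge 1-C\delta>0$ on that level set, which follows from convexity. Applying the implicit function theorem to $N_\delta(x/t)=1$ then shows that $M_\delta$ is $C^\infty$ on $\R^5\setminus\{0\}$. Moreover, $M_\delta\to N$ uniformly on the Euclidean sphere: the bound $|N_\delta-N|\le C\delta$ gives
\[
\{N\le 1-C\delta\}\subset K_\delta\subset\{N\le 1+C\delta\},
\]
and hence $(1+C\delta)^{-1}N\le M_\delta\le(1-C\delta)^{-1}N$.

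Third, we add strong convexity. We set
\[
N_{\delta,\eta}(x)=\bigl(M_\delta(x)^2+\eta|x|^2\bigr)^{1/2}.
\]
This is a norm, since it is the $\ell^2$-combination of two norms, so the triangle inequality follows from Minkowski's inequality. It is smooth away from $0$, and $N_{\delta,\eta}^2=M_\delta^2+\eta|x|^2$. The function $M_\delta^2$ is convex and smooth off the origin, so its Hessian is positive semidefinite there. Adding $\eta|x|^2$ contributes $2\eta\,\mathrm{Id}$, so $D^2(N_{\delta,\eta}^2)\ge 2\eta\,\mathrm{Id}>0$. Letting $\eta\to0$ and then $\delta\to0$, uniform convergence on the sphere follows from the estimates above, and a diagonal choice gives a sequence of smooth strongly convex norms converging to $N$.

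For the consequence, suppose $N$ has no six-point equilateral set. The set of norms that do have one is closed by Lemma~\ref{lem:closed}, so its complement is an open neighbourhood of $N$. That neighbourhood contains some smooth strongly convex $N_{\delta,\eta}$, which is then a counterexample of the required kind. The only step needing care is the smoothness of the gauge $M_\delta$, and the radial derivative bound from convexity handles it.
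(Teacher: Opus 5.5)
Your proof is correct, but it takes a different route from the paper.

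\textbf{The paper's route.} It approximates $N$ from below by $\max_j|f_j|$ for finitely many norming functionals, obtained from Hahn--Banach and compactness of the sphere. It then appends small coordinate forms and forms $x_i+x_j$, and takes $M=(\sum_j f_j^p)^{1/p}$ with $p$ even and large. This $M$ is the pullback of an $\ell_p$ norm under an injective linear map, so the norm axioms and real-analyticity off the origin come for free. Strong convexity takes more work: the explicit Hessian of $Q=\sum_j f_j^p$, the extra forms to make it definite, and the splitting $v=ax+w$ to pass from $D^2Q$ to $D^2(M^2)$.

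\textbf{Your route.} You smooth analytically. Mollification gives a smooth convex $N_\delta$. Its sublevel gauge $M_\delta$ is smooth by the radial bound $\langle\nabla N_\delta(x),x\rangle\geq 1-C\delta$ and the implicit function theorem, and you get explicit multiplicative bounds $(1+C\delta)^{-1}N\leq M_\delta\leq(1-C\delta)^{-1}N$. Strong convexity is then a one-line consequence of adding $\eta|x|^2$, since $D^2(M_\delta^2)\geq 0$ by convexity.

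\textbf{Comparison.} The paper avoids convolution and the implicit function theorem at the price of a combinatorial Hessian check. Your route cleanly separates smoothing from strong convexification. Your $\eta|x|^2$ device would equally have made the extra coordinate and $x_i+x_j$ forms unnecessary in the paper's construction.

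\textbf{One detail to state explicitly.} The function $s\mapsto N_\delta(sx)$ is convex and takes the value $N_\delta(0)<1$ at $s=0$, so it crosses level one exactly once for $s>0$. This identifies the local implicit-function solution with the gauge everywhere. Alternatively, continuity of $M_\delta$ together with local uniqueness in the implicit function theorem does the same job.

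Your deduction of the second sentence from Lemma~\ref{lem:closed} matches the paper's.
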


\begin{proof}
Fix a norm $N$ and $0<\eta<1$. A norming functional at a unit vector
$u$ is a linear form $f$ with $f(u)=1$ and $|f(x)|\leq N(x)$ for all
$x$. Such a form exists by Hahn--Banach, applied to $f(tu)=t$ on
$\R u$; see~\cite{Thompson1996}. Each form remains greater than
$1-\eta$ on a neighborhood of its contact point in the unit sphere.
Compactness therefore supplies finitely many forms $f_1,\ldots,f_L$ with
\[
 (1-\eta)N(x)\leq\max_j|f_j(x)|\leq N(x).
\]
Append sufficiently small nonzero multiples of all coordinate forms
and all forms $x_i+x_j$, keeping their dual norms at most one.
Write $L$ for the total number of forms after these additions.
For an even integer $p\geq4$ set
\[
 Q(x)=\sum_j f_j(x)^p,\qquad M(x)=Q(x)^{1/p}.
\]
The linear map $x\mapsto(f_j(x))_j$ is injective, and $M$ is the
pullback of the $\ell_p$ norm under this map. Thus it is a norm;
as $Q(x)>0$ for $x\ne0$, it is analytic away from zero. Moreover,
\[
 (1-\eta)N\leq M\leq L^{1/p}N.
\]
Choose $p$ so large that $L^{1/p}\leq1+\eta$.

For $x\ne0$,
\[
 D^2Q(x)[v,v]=p(p-1)\sum_j f_j(x)^{p-2}f_j(v)^2.
\]
If this vanishes, choose a coordinate $x_i\ne0$. The coordinate
forms first give $v_j=0$ whenever $x_j\ne0$; the form $x_i+x_j$
then gives $v_j=0$ for every remaining $j$. Thus $D^2Q(x)$ is
positive definite. To deduce strong convexity of $M$, write
$v=ax+w$ with $DM(x)[w]=0$. Homogeneity gives
\[
 D^2\bigl(M^2\bigr)(x)[v,v]
 =2a^2M(x)^2+2M(x)D^2M(x)[w,w].
\]
On $\ker DM(x)$, differentiation of $Q=M^p$ shows that
$D^2Q=pM^{p-1}D^2M$. The displayed expression is therefore positive
for $v\ne0$. Letting $\eta\to0$ proves the approximation claim.
The final assertion follows from Lemma~\ref{lem:closed}.
\end{proof}

For the construction below, approximation of a single norm is not enough.
We will perturb an entire path of squared norms by functions which need
not themselves be convex. The following observation explains why a
sufficiently small perturbation remains a path of norms.

\begin{lemma}\label{lem:perturbation}
Let $Q:[0,1]\times(\R^5\setminus\{0\})\to(0,\infty)$ be smooth,
even and homogeneous of degree two in the second variable. Assume
$D_u^2Q(t,u)$ is positive definite for every $t$ and $u\ne0$.
Then $N^t=\sqrt{Q(t,\cdot)}$ is a smooth strongly convex norm.
If $h_1,\ldots,h_L$ are smooth even degree-two homogeneous functions
away from zero, and $\chi$ is a smooth function of $t$, then the same
conclusion holds for
\[
 Q_{t,a}(u)=Q(t,u)+\chi(t)\sum_{j=1}^L a_jh_j(u)
\]
whenever the vector $a\in\R^L$ is sufficiently small. Such a parameter
ball can be chosen uniformly for all $t\in[0,1]$.
\end{lemma}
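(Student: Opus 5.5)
The plan is to prove the two assertions separately. Write $S$ for the Euclidean unit sphere. For the first assertion, fix $t$ and set $N=\sqrt{Q(t,\cdot)}$, extended by $N(0)=0$. Then $N$ is positive away from zero, even, and positively homogeneous of degree one: homogeneity of degree two gives $Q(t,su)=s^2Q(t,u)$ for $s>0$, and evenness handles $s<0$. Only the triangle inequality remains. Convexity of $Q(t,\cdot)$ on $\R^5$ is the key point. On any segment avoiding zero, $Q$ is convex because its Hessian is positive definite. Since $Q$ extends continuously by $0$ at the origin and is nonnegative, convexity extends to segments through zero by a limiting argument. Hence the unit ball $\{Q\le1\}$ is convex, symmetric, bounded and absorbing, and its gauge is exactly $\sqrt{Q}$. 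So $N^t$ is a norm. It is smooth away from zero because $Q>0$ there, and it is strongly convex because $D^2\bigl((N^t)^2\bigr)=D_u^2Q$ is positive definite by hypothesis.

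For the perturbation, the plan is to check that the hypotheses of the first part persist. Smoothness, evenness and degree-two homogeneity in $u$ are clear for $Q_{t,a}$, so the issue is positivity of $Q_{t,a}$ and positive definiteness of $D_u^2Q_{t,a}$. By homogeneity, $Q$ scales by $s^2$ and $D_u^2Q(t,su)=D_u^2Q(t,u)$, so it suffices to check both on the compact set $[0,1]\times S$. By continuity and compactness there are constants $c>0$ and $C<\infty$ with
\[
 Q(t,u)\ge c,\qquad D_u^2Q(t,u)[v,v]\ge c|v|^2,
\]
\[
 |h_j(u)|\le C,\qquad |D^2h_j(u)[v,v]|\le C|v|^2,
\]
for all $t\in[0,1]$, $u\in S$ and $j$. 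Also $\chi$ is bounded on $[0,1]$, say $|\chi|\le C'$. Whenever $C'C\sum_j|a_j|<c$, both positivity and positive definiteness hold for every $t$. This condition describes a ball of parameters independent of $t$, which is the claimed uniformity.

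The main obstacle is the convexity of $Q$ across the origin, where $Q$ need not be smooth. Two routes are available. The direct one restricts $Q$ to a line through zero, where it is a multiple of $s^2$. The other uses the fact that the sublevel set is star-shaped with locally convex boundary, which gives convexity. I would take the limiting argument: perturb the segment slightly off the origin and pass to the limit using continuity of $Q$ at $0$.
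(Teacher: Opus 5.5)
Your proposal is correct and follows the paper's proof. Convexity of $Q(t,\cdot)$ gives a convex symmetric unit ball with gauge $\sqrt{Q}$, and uniform bounds on $[0,1]\times S^4$ give a $t$-independent parameter ball. The only difference is at the origin: the paper extends $Q$ as a $C^1$ function with zero derivative there, whereas your remark that a segment through zero lies on a line where $Q(t,su)=s^2Q(t,u)$ settles that point more directly.
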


\begin{proof}
Extend each homogeneous function by zero at the origin. A smooth
degree-two homogeneous function has gradient homogeneous of degree one.
Bounds on the unit sphere show that this extension is $C^1$ at zero,
with derivative zero. The restriction of $Q(t,\cdot)$ to any affine
line has nonnegative second derivative away from a possible passage
through zero, and its first derivative is continuous at that passage.
It is therefore convex on the whole line. Thus
$K_t=\{u:Q(t,u)\leq1\}$ is convex and symmetric. Positivity and
compactness of the sphere make $K_t$ bounded with zero in its interior.
Its Minkowski functional, defined by
$p_{K_t}(u)=\inf\{r>0:u\in rK_t\}$, equals $\sqrt{Q(t,u)}$ by
homogeneity. Convexity gives
$p_{K_t}(u+v)\leq p_{K_t}(u)+p_{K_t}(v)$: divide $u+v$ by the sum
of the two gauges and use a convex combination of points in $K_t$.
Symmetry and boundedness give the other norm axioms. Its strong
convexity is the assumed Hessian condition.

On the compact set $[0,1]\times S^4$, choose $c,\lambda>0$ with
\[
 Q(t,u)\geq c,\qquad D_u^2Q(t,u)[v,v]\geq\lambda|v|^2.
\]
The values and Hessians of the finitely many $\chi(t)h_j$ are bounded
there. For $|a|$ sufficiently small their total contributions have
absolute value at most $c/2$ and Hessian operator norm at most
$\lambda/2$. This preserves positivity and positive-definite Hessians
on the time-sphere product, and homogeneity extends both properties
to all nonzero $u$. The first part then applies to every $Q_{t,a}$.
\end{proof}

\papersubsection{Configuration matrices and affine relations}

We represent centered six-point configurations by linear maps and
derive the rank and relation inequalities imposed by strict convexity.
These restrictions will control the singular matrices in the bordism.

Write $\mathbf1=(1,\ldots,1)\in\R^6$ and set
\begin{equation}\label{eq:simplex}
 U=\Bigl\{c\in\R^6:\sum_i c_i=0\Bigr\},\qquad
 b_i=\frac{e_i-\tfrac16\mathbf1}{\sqrt2},\quad 1\leq i\leq6.
\end{equation}
Direct calculation gives
$\langle b_i,b_j\rangle=\frac12(\delta_{ij}-\frac16)$, so the $b_i$
form a centered regular simplex of side length one. Also
$\sum_i a_ib_i=0$ holds exactly when all $a_i$ are equal.
Thus they span $U$, with sole relation $\sum_i b_i=0$.
Hence each centered tuple $y=(y_1,\ldots,y_6)$ in
$\R^5$ has a unique matrix $A_y\in\Hom(U,\R^5)$ satisfying
\begin{equation}\label{eq:matrix}
 A_yb_i=y_i.
\end{equation}
This is a linear identification of the centered configuration space
with $\Hom(U,\R^5)$. Since the centroid is zero, the affine span of
the tuple is the linear span of its vertices, namely $\operatorname{im}A_y$.
Consequently $\rank A_y$ equals its affine dimension, and
\[
 \ker A_y=\Bigl\{c\in\R^6:\sum_i c_i=0,\quad
                           \sum_i c_i y_i=0\Bigr\},
\]
since $\sum_i c_i b_i=c/\sqrt2$ for $c\in U$.

We fix an oriented isometry $U\cong\R^5$ and identify the space of
orientation-preserving isometries $U\to\R^5$ with $\SO(5)$.

The group $S_6$ acts orthogonally on $U$ by coordinate permutations.
Write $\rho$ for this representation, and use the same notation for its
restriction to $C_5=\langle(12345)\rangle$, which fixes label six.
For $\sigma\in S_6$, the relabeling action
$(\sigma\cdot y)_i=y_{\sigma^{-1}(i)}$ satisfies
$A_{\sigma\cdot y}=A_y\rho(\sigma)^{-1}$, since
$\rho(\sigma)b_i=b_{\sigma(i)}$.

For $c\in U$ define
\[
 S(c)=\frac12\sum_i|c_i|,\qquad
 \osc(c)=\max_i c_i-\min_i c_i.
\]
For a nonzero zero-sum vector, $S(c)$ is both its total positive
mass and its total negative mass in absolute value.

\begin{lemma}\label{lem:balance}
Let $y_1,\ldots,y_6$ be equilateral in a strictly convex norm. They
are strictly antipodal: for every $i\ne j$ there is a linear
functional uniquely maximal at $y_i$ and uniquely minimal at $y_j$.
Every nonzero affine relation $c$ among these points satisfies
\begin{equation}\label{eq:balance}
 \osc(c)<S(c).
\end{equation}
Moreover, distinct unordered edges have distinct directions modulo sign.
\end{lemma}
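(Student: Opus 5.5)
Throughout, write $\lambda$ for the common distance, $N$ for the strictly convex norm, and $B=\{N\le\lambda\}$ for the closed ball of radius $\lambda$. The plan has three steps: strict antipodality, the balance inequality \eqref{eq:balance}, and distinctness of edge directions. Strict antipodality is the step that carries the geometry; the other two follow from it by short arguments.

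\emph{Strict antipodality.} Fix $i\ne j$. Every $y_k$ with $k\ne i$ satisfies $N(y_k-y_i)=\lambda$, so all points lie in $y_i+B$. Likewise all points lie in $y_j+B$. Choose a supporting functional $f$ of $B$ at the boundary point $y_j-y_i$, meaning $f(y_j-y_i)=\lambda$ and $f\le\lambda$ on $B$. Then $f(y_k-y_i)\le\lambda$ for all $k$, so $f$ is maximal at $y_j$. If $f(y_k)=f(y_j)$ for some $k\ne j$, then both $y_k-y_i$ and $y_j-y_i$ lie on the sphere $\partial B$ inside the face $\{f=\lambda\}$. Strict convexity forces this face to be a single point, so $y_k=y_j$, which is impossible. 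Hence $f$ is uniquely maximal at $y_j$. Applying the same argument to $-f$, with the roles of $i$ and $j$ exchanged through the symmetry of $B$, the functional is also uniquely minimal at $y_i$. Swapping the labels gives the statement in the orientation asked for.

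\emph{Balance inequality.} Let $c\ne0$ be a relation with $\sum c_k=0$ and $\sum c_ky_k=0$. Choose $i$ with $c_i=\max c$ and $j$ with $c_j=\min c$; since $c\ne0$ and the entries sum to zero, $c_i>0>c_j$. Take the functional $f$ from the first step that is uniquely maximal at $y_i$ and uniquely minimal at $y_j$. By translation invariance of the relation, normalise so that $f(y_j)=0$ and $f(y_i)=1$; then $0\le f(y_k)\le1$, with equality only at the extreme indices. Apply $f$ to the relation and write $t_k=f(y_k)$:
\[
 0=\sum_k c_kt_k=c_i+\sum_{k\ne i,j}c_kt_k .
\]
The middle sum is at least $-\sum_{k\ne i,j}c_k^-$, with strict inequality whenever some $c_k<0$, since then $t_k<1$. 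Hence $c_i\le\sum_{k\ne i,j}c_k^-=S(c)-|c_j|$. This gives $\osc(c)=c_i-c_j\le S(c)$.

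Strictness is the delicate step. If every $c_k\ge0$ for $k\ne i,j$, the bound reads $c_i\le0$, contradicting $c_i>0$. If some $c_k<0$ for $k\ne i,j$, then $t_k<1$ makes the middle sum strictly larger than $-\sum c_k^-$, so $c_i<S(c)-|c_j|$. In both cases $\osc(c)<S(c)$. The main thing to check here is that $S(c)$ counts the negative mass including $c_j$, which is what the identity $S(c)=\sum_k c_k^-$ provides.

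\emph{Edge directions.} Suppose two distinct unordered edges give $y_a-y_b=\pm(y_c-y_d)$. After reordering the pairs we may assume $y_a-y_b=y_c-y_d$. If the edges share a vertex, say $a=c$, then $y_b=y_d$, so the edges coincide; the other shared-vertex cases give $y_a=y_d$ or a difference equal to its own negative, both impossible. If the edges are disjoint, the relation $c=e_a-e_b-e_c+e_d$ is a nonzero affine relation with $\osc(c)=2=S(c)$, contradicting \eqref{eq:balance}.
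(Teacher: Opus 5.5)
Your proofs of strict antipodality and of \eqref{eq:balance} are correct and are essentially the paper's argument. You take a supporting functional at the edge vector and use strict convexity for uniqueness. You then evaluate the relation on the affinely normalised functional, and strictness comes from a second negative coefficient, the alternative being contradictory.

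The gap is in the shared-vertex part of the last step. Once you fix the orientation so that $y_a-y_b=y_c-y_d$, the common vertex can be the head of one edge and the tail of the other: $a=d$ with $b\neq c$, or $b=c$ with $a\neq d$. In the first case the identity reads $y_a-y_b=y_c-y_a$, that is $2y_a=y_b+y_c$, so $y_a$ is the midpoint of $y_b$ and $y_c$. This is neither a coincidence of two vertices nor a vector equal to its own negative; the latter would require $b=c$ as well, which is the same unordered edge. So your listed outcomes do not cover this configuration. Only the head-to-head and tail-to-tail cases $a=c$ and $b=d$ are actually handled. (If you instead name the shared vertex $a=c$ and keep the sign $\pm$, the minus sign gives exactly this midpoint relation, not $v=-v$.)

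The repair is one line. Directly, $y_c-y_b=2(y_a-y_b)$ gives $N(y_c-y_b)=2\lambda\neq\lambda$. Within your own framework, note that $e_a-e_b-e_c+e_d$ is a nonzero affine relation whenever the unordered edges differ, and in every case $\osc\geq S$. In the head-to-tail case it becomes $2e_a-e_b-e_c$ or $e_a-2e_b+e_d$, with $\osc=3>2=S$, contradicting \eqref{eq:balance}. With that remark your disjoint and shared-vertex cases collapse into one argument.

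The paper avoids the case split differently. Your antipodality argument shows that \emph{any} supporting functional at $v=y_a-y_b=y_c-y_d$ is uniquely maximal at the head and uniquely minimal at the tail of every edge with difference $v$. Hence $a=c$ and $b=d$ at once.
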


\begin{proof}
Normalize all distances to one and choose a norming functional
$\ell$ of $y_i-y_j$. Strict convexity implies that $y_i-y_j$ is
the only unit vector where $\ell$ takes the value one.
For $k\notin\{i,j\}$, the support inequality gives
$\ell(y_k-y_j)<1=\ell(y_i-y_j)$ and
$\ell(y_i-y_k)<1=\ell(y_i-y_j)$, since the corresponding unit
vectors are distinct. Rearranging gives
\[
 \ell(y_j)<\ell(y_k)<\ell(y_i).
\]
Thus the tuple is strictly antipodal. In particular, the difference
between the maximum and minimum of $\ell$ on the vertices is one.

For a nonzero affine relation $c$, choose $i$ with
$c_i=\max_k c_k>0$ and $j$ with $c_j=\min_k c_k<0$.
Normalize the corresponding affine functional to take values
$u_i=1$, $u_j=0$, and $0<u_k<1$ at every other vertex.
The relation gives $\sum_k c_ku_k=0$. Therefore
\[
 c_i\leq\sum_{c_k>0}c_ku_k
       =\sum_{c_k<0}(-c_k)u_k\leq S(c)+c_j.
\]
At least one inequality is strict: otherwise $i$ and $j$ are the
only nonzero coefficients, which would imply $y_i=y_j$.
This proves~\eqref{eq:balance}.

Finally, parallel unit edges agree after orienting them suitably.
A norming functional for their common vector is uniquely maximal and
minimal at the endpoints of each edge, as just proved. The ordered
endpoints must therefore agree.
\end{proof}

\begin{lemma}\label{lem:rank}
Every six-point equilateral configuration in a strictly convex norm
on $\R^5$ has affine rank at least four. At rank four, its unique
relation line satisfies~\eqref{eq:balance}.
\end{lemma}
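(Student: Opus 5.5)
The plan is to argue by contradiction on the affine rank, using the Sch\"urmann--Swanepoel classification cited in the introduction, and then to read off the rank-four statement from Lemma~\ref{lem:balance}.

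Suppose $y_1,\ldots,y_6$ is equilateral for a strictly convex norm $N$ on $\R^5$ and that its affine span $E$ has dimension $d\leq3$. Translate so that $y_1=0$; then $E$ is a linear subspace of dimension $d$. The restriction $N|_E$ is again a norm, and it is strictly convex, because a segment in the unit sphere of $E$ is also a segment in the unit sphere of $\R^5$. If $d<3$, enlarge $E$ to a three-dimensional subspace $E'\supset E$. The restriction $N|_{E'}$ is still strictly convex, and the six points form an equilateral set in it. This contradicts~\cite[Theorem~2.3]{SchurmannSwanepoel2006}, which excludes six equilateral points in a strictly convex three-dimensional normed space.

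I would expect the main obstacle to be matching the exact hypotheses of the cited classification. It should apply verbatim to strictly convex three-dimensional spaces. If it were stated only for smooth or other restricted norms, I would fall back on the stronger direct bound: equilateral sets in a strictly convex $d$-dimensional normed space have at most $2^d-1$ points, since their antipodality is strict by Lemma~\ref{lem:balance}. That bound gives the contradiction only for $d\leq2$, so the case $d=3$ genuinely needs the cited result.

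At rank exactly four, the matrix $A_y$ has domain $U$ of dimension five and rank four, so $\ker A_y$ is one-dimensional. By the formula for $\ker A_y$ in~\eqref{eq:matrix} and the display after it, this kernel is exactly the space of affine relations $\sum c_i=0$, $\sum c_iy_i=0$. So there is a unique relation line. Each nonzero $c$ on this line is a nonzero affine relation among points that are equilateral in a strictly convex norm. Lemma~\ref{lem:balance} therefore gives $\osc(c)<S(c)$, which is~\eqref{eq:balance}; note that both sides are homogeneous of degree one, so the inequality holds along the whole line.
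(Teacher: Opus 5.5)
Your proof is correct and follows the paper's argument. You restrict the strictly convex norm to a three-dimensional subspace containing the translated affine span, invoke Sch\"urmann--Swanepoel, and then apply Lemma~\ref{lem:balance} to the one-dimensional relation space at rank four; the paper's only addition is to say why the citation applies to strictly convex norms: six equilateral points force either the one-skeleton of an affine regular octahedron on the unit sphere or a two-dimensional face of the unit ball, both of which contain segments, so your $2^d-1$ fallback is not needed.
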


\begin{proof}
By the classification theorem of Sch\"urmann and
Swanepoel~\cite[Theorem~2.3]{SchurmannSwanepoel2006}, a normed
three-space with six equilateral points has either
the one-skeleton of an affine regular octahedron on its unit sphere,
or a two-dimensional face of its unit ball. Each alternative is
incompatible with strict convexity. If the affine rank of our tuple
were at most three, translating its affine span into a
three-dimensional subspace and restricting the norm would give a
contradiction. The restricted unit sphere is the intersection of the
original unit sphere with that subspace, so it too contains no
nontrivial segment. The last assertion follows from Lemma~\ref{lem:balance}.
\end{proof}

\papersubsection{The configuration bordism}

We use the preceding perturbation and geometric lemmas to construct
a compact oriented bordism from the complete zero set of the distance
equations. We also determine its label action and its Euclidean fiber.

Let
\[
 V=\Bigl\{(y_1,\ldots,y_6)\in(\R^5)^6:\sum_i y_i=0\Bigr\},
 \qquad W=\R^{15},
\]
where the coordinates of $W$ are indexed by unordered pairs of labels.
On the open subset $V^\circ$ of tuples with distinct entries define
\begin{equation}\label{eq:residual}
 F_N(y)=\bigl(N(y_i-y_j)^2-1\bigr)_{i<j}.
\end{equation}
A smooth norm is called \emph{regular} here if $D_yF_N$ is onto
at every zero. The empty zero set is regular by convention.
The symmetric group $S_6$ acts by
$(\sigma\cdot y)_i=y_{\sigma^{-1}(i)}$ and permutes the edge
coordinates of $W$. These actions make $F_N$ equivariant.
The next lemma applies parameter transversality to the complete
zero set of~\eqref{eq:residual}.

\begin{lemma}\label{lem:bordism}
Let $N^0,N^1$ be regular smooth strongly convex norms on $\R^5$.
There is a smooth path $N^t$, constant near its endpoints, for which
\[
 \B=\bigl\{(t,y)\in[0,1]\times V^\circ:F_{N^t}(y)=0\bigr\}
\]
is a compact oriented smooth eleven-manifold, with boundary the two
endpoint fibers. The label action of $S_6$ is free. Its subgroup
$C_5=\langle(12345)\rangle$ preserves the orientation.
The Euclidean norm is regular, and its zero fiber is equivariantly
diffeomorphic to $O(U,\R^5)$.
\end{lemma}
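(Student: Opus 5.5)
Start with the Euclidean endpoint, which is also needed as one of the two regular norms. For $N=|\cdot|$, a zero of $F_N$ is a centered tuple with all pairwise distances one. Such a tuple is determined up to isometry by its Gram matrix, so it equals $A b_i$ for a linear isometry $A:U\to\R^5$; this uses the centered regular simplex~\eqref{eq:simplex} and the identification~\eqref{eq:matrix}. The zero fiber is therefore $\{y:A_y\in O(U,\R^5)\}$. The identity $A_{\sigma\cdot y}=A_y\rho(\sigma)^{-1}$ makes this identification equivariant. For regularity, $D_yF(\dot y)=(2\langle y_i-y_j,\dot y_i-\dot y_j\rangle)_{i<j}$. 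Taking $\dot y=BA b_i$ with $B$ symmetric on $\R^5$ gives $2\langle (y_i-y_j),B(y_i-y_j)\rangle$. Since $A$ is an isometry, the $15$ vectors $y_i-y_j$ are the edge vectors of a regular simplex, and their squares span $\mathrm{Sym}^2(\R^5)$, which has dimension $15$. So the derivative is onto. As a consistency check, the fiber has dimension $\dim V-15=10=\dim O(5)$.

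Next comes the path. Start with the straight path $Q(t,u)=(1-\tau(t))N^0(u)^2+\tau(t)N^1(u)^2$, where $\tau$ is smooth, monotone, and $0$ near $t=0$ and $1$ near $t=1$. A convex combination of positive definite Hessians is positive definite, so Lemma~\ref{lem:perturbation} applies. Perturb it as $Q_{t,a}=Q+\chi(t)\sum a_jh_j$, where $\chi$ vanishes near both endpoints and equals one on a compact subinterval. Near $t\in\{0,1\}$ regularity is already given by the hypothesis on $N^0,N^1$. This regularity is an open condition: by the compactness argument below, zeros at nearby times lie close to endpoint zeros, where $DF$ stays onto. So one can choose the support of $\chi$ inside the interval where $DF$ could fail. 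Apply Lemma~\ref{lem:parameter} to $\Phi((t,y),a)=(Q_{t,a}(y_i-y_j)-1)_{i<j}$ on $(0,1)\times V^\circ\times P$.

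Transversality of $\Phi$ is the main obstacle. Lemma~\ref{lem:balance} says distinct edges have distinct directions modulo sign. So I would choose the $h_j$ as even degree-two homogeneous bumps: for each direction class, take $|u|^2$ times a cutoff of $u/|u|$ concentrated near that direction. With a finite family subordinate to a fine enough cover of $\mathbb{RP}^4$, these bumps can move each edge coordinate of $W$ independently at every zero. This needs a uniform separation of edge directions over the compact set of zeros. That separation comes from Lemma~\ref{lem:balance} together with compactness. Strong convexity holds uniformly along the path, so there is a positive lower bound on the angle between distinct edges over all zeros. The derivative in $a$ alone is then onto $W$. Since $\chi=1$ there, this holds at every zero with $t$ in the support region. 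Lemma~\ref{lem:parameter} gives a small $a$ for which $(t,y)\mapsto F_{N^t}(y)$ is transverse to zero. At the endpoints, the boundary restriction is transverse by the regularity of $N^0$ and $N^1$. Lemma~\ref{lem:regular-fiber} then makes $\B$ a smooth manifold of dimension $1+25-15=11$ with boundary the endpoint fibers. It is oriented by the preimage orientation, using orientations of $[0,1]\times V$ and $W$.

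Compactness is the argument of Lemma~\ref{lem:closed}. The norms $N^t$ are uniformly equivalent to $|\cdot|$, so zeros are bounded and have pairwise distances bounded below. Hence they stay in a compact subset of $V^\circ$, and $\B$ is closed in $[0,1]\times V$. The $S_6$ action is free because the entries are distinct, so no nontrivial permutation fixes a tuple. For orientation, $(12345)$ acts on $V\cong(\R^5)^5$ by permuting five summands of $\R^5$. That permutation has sign $(+1)^5$ up to the centering correction, and is orientation preserving since it has odd order. Similarly, it permutes the $15$ edges of $W$ in cycles of length five, so it acts with determinant $+1$. The action fixes $t$. The equivariant clause of Lemma~\ref{lem:regular-fiber} then shows that $C_5$ preserves the orientation of $\B$.
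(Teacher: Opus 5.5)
Your architecture matches the paper's proof: the straight path of squared norms, a cutoff $\chi$ vanishing near the endpoints, even degree-two bumps made possible by Lemma~\ref{lem:balance}, Lemma~\ref{lem:parameter} on $(0,1)\times V^\circ\times P$, compactness from uniform equivalence, the odd-order determinant argument, and the Gram-matrix identification of the Euclidean fiber. There is, however, one step you skip.

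Lemma~\ref{lem:parameter} requires $\Phi$ to be transverse to zero at \emph{every} zero in $(0,1)\times V^\circ\times P$. This includes the new zeros of the perturbed norms $N_{t,a}$ with $a\ne0$. Your separation constant, and hence surjectivity of the $a$-derivative, comes from compactness of the zero set of the unperturbed path. You cannot take it over the perturbed zeros from the start. Those zeros depend on the bumps, and you choose the bumps' support scale from that constant. The missing argument is the paper's Step~3:
\begin{itemize}
\item Fix the family of bumps.
\item By uniform equivalence, all zeros with $a$ in a small closed ball lie in one compact set $D\subset[0,1]\times V^\circ$. So you need the compactness estimate for the whole family, not only for the final path.
\item Shrink $P$. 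If nontransverse zeros existed for $a_k\to0$, they would subconverge to a zero at $a=0$, where $D\Phi$ is onto. This contradicts openness of surjectivity. Equivalently, shrink $P$ until the edge separation at perturbed zeros exceeds the support scale of the bumps.
\end{itemize}

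There are also two smaller points.

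First, your sentence about the support of $\chi$ is backwards as written. What you need is $\chi>0$ at every time where $D_yF_{N^t}$ might fail to be onto at a zero, with $\chi$ vanishing only where the unperturbed path is already regular. Since your $\tau$ is constant near $t=0,1$, you can take $\chi=1$ wherever $\tau$ is nonconstant and let it drop to zero inside the constant zones, as the paper does. The openness argument near the endpoints is then unnecessary.

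Second, for Euclidean regularity, restricting to linear deformations $\dot y_i=By_i$ is a valid alternative to the paper's weight argument. However, the spanning claim is its whole content, and you only assert it. Polarization supplies it. Put $q(v)=\langle v,Bv\rangle$ and $w_i=y_i-y_6$ for $i\leq5$, which form a basis. Then $2\langle w_i,Bw_j\rangle=q(w_i)+q(w_j)-q(y_i-y_j)$. Hence $B\mapsto(q(v_e))_e$ is injective between fifteen-dimensional spaces, and therefore onto. This works at any affinely independent tuple, not only the regular simplex.

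Your single fine-cover family of bumps, and your use of Lemma~\ref{lem:regular-fiber} with boundary instead of collars, are fine.
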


\begin{proof}
We construct a finite-dimensional family of norm perturbations, apply
Lemma~\ref{lem:parameter}, and then identify its boundary and orientation.

\textbf{Step 1.} Construct a path with fixed endpoints and uniform bounds.

\nopagebreak[4]
Choose a smooth $\eta:[0,1]\to[0,1]$ equal to zero on $[0,1/3]$
and one on $[2/3,1]$, and set
\[
 Q_t=\bigl(1-\eta(t)\bigr)(N^0)^2+\eta(t)(N^1)^2.
\]
By Lemma~\ref{lem:perturbation}, $\sqrt{Q_t}$ is a smooth strongly
convex norm. On the compact time-sphere product, uniform equivalence
to the Euclidean norm gives constants $a_0,b_0>0$ with
$a_0|u|\leq\sqrt{Q_t(u)}\leq b_0|u|$. The same comparison, with
$a_0$ decreased and $b_0$ increased once, holds for every sufficiently
small perturbation on the sphere. At a unit-equilateral tuple,
\begin{equation}\label{eq:compact-estimates}
 b_0^{-1}\leq|y_i-y_j|\leq a_0^{-1},\qquad
 |y_i|=\Bigl|\frac16\sum_j(y_i-y_j)\Bigr|
       \leq\frac{5}{6a_0}.
\end{equation}
Consequently all zeros of all sufficiently small perturbations lie in
one fixed compact subset $D\subset[0,1]\times V^\circ$. This excludes
both escape to infinity and collisions. In particular the unperturbed
zero set is compact.

\textbf{Step 2.} Perturb the fifteen edge equations independently.

\nopagebreak[4]
Choose a smooth $\chi$ equal to one on $[1/3,2/3]$ and zero on
$[0,1/4]\cup[3/4,1]$. Fix an unperturbed zero $(t,y)$ with
$t\in[1/3,2/3]$. For each unordered edge $e=\{i,j\}$ write
$v_e=y_i-y_j$ with either choice of sign. By Lemma~\ref{lem:balance}
the fifteen points $[v_e]\in\mathbb{RP}^4$ are distinct. Choose
smooth even functions $\theta_e$ on $S^4$, each equal to one at
$\pm v_e/|v_e|$, whose supports are pairwise disjoint. Smooth bump
functions exist in these neighborhoods~\cite[Chapter~2]{Lee2013};
averaging each function with its antipodal pullback makes it even. Define
\[
 h_e(u)=|u|^2\theta_e\bigl(u/|u|\bigr),\qquad u\ne0.
\]
For the perturbation $Q_t+\chi(t)\sum_e a_eh_e$, differentiation of
the residual on edge $e'$ with respect to $a_e$ gives
\[
 \chi(t)h_e\bigl(v_{e'}\bigr)=|v_{e'}|^2\delta_{ee'}.
\]
Thus the parameter derivative is onto $W$ at this zero, and remains
onto nearby. Compactness of the zero set in the middle interval gives
finitely many such collections. List all their functions as
$h_1,\ldots,h_L$ and set
\begin{align}\label{eq:universal}
 Q_{t,a}(u)&=Q_t(u)+\chi(t)\sum_{j=1}^L a_jh_j(u),\\
 \Phi(t,y,a)&=\bigl(Q_{t,a}(y_i-y_j)-1\bigr)_{i<j}.\notag
\end{align}
The parameters belong to a small open ball $P\subset\R^L$.
For $a=0$ the total derivative of $\Phi$ is onto at every zero in
the middle interval. Outside that interval, $Q_t$ is an endpoint
squared norm and its configuration derivative is onto by endpoint
regularity. Hence $D\Phi$ is onto at every zero with $a=0$.

\textbf{Step 3.} Choose a parameter giving the required smooth bordism.

\nopagebreak[4]
Lemma~\ref{lem:perturbation} lets us shrink $P$ so that every
$N_{t,a}=\sqrt{Q_{t,a}}$ is a strongly convex norm, with the uniform
bounds used in~\eqref{eq:compact-estimates}. Shrink $P$ again so that
$\Phi$ is transverse to zero throughout $[0,1]\times V^\circ\times P$.
To justify this last choice, suppose that nontransverse zeros existed
for a sequence $a_k\to0$. Their $(t,y)$ coordinates lie in $D$, so a
subsequence converges to $(t_*,y_*)$. This is a zero at $a=0$, where
$D\Phi$ is onto by Step 2. Surjectivity is open, a contradiction.
This argument accounts for all possible new zeros of the perturbed
path, not only those in one local configuration chart.

Apply Lemma~\ref{lem:parameter} with
$M=(0,1)\times V^\circ$ and the family~\eqref{eq:universal}.
Choose a regular parameter $a\in P$ and fix $N^t=N_{t,a}$.
By Lemma~\ref{lem:regular-fiber}, the full interior zero set is a smooth
manifold of dimension $1+\dim V-\dim W=1+25-15=11$.
On the endpoint collars $\chi=0$ and $Q_t$ is constant. The zero set
there is exactly an interval times the regular endpoint fiber.
Therefore the complete zero set in $[0,1]\times V^\circ$ is a smooth
manifold with boundary exactly the two endpoint fibers. It is compact
by~\eqref{eq:compact-estimates}.

\textbf{Step 4.} Prove freeness and preservation of orientation.

\nopagebreak[4]
For any norm, relabeling a tuple simply permutes the edge residuals.
Thus every perturbation above is $S_6$-equivariant, even though no
spatial rotational invariance is imposed on the bump functions.
No nonidentity label permutation fixes a tuple of distinct points,
so the $S_6$-action on $\B$ is free.

Orient $\R\oplus V$ and $W$. Along the regular zero set the derivative
gives the equivariant exact sequence
\begin{equation}\label{eq:orientation}
 0\longrightarrow T\B\longrightarrow
 \B\times(\R\oplus V)\longrightarrow\B\times W\longrightarrow0.
\end{equation}
Writing $D$ for this surjective derivative, the Euclidean metrics give
the smooth equivariant right inverse $D^*(DD^*)^{-1}$.
Thus the sequence splits and its determinant line defines an orientation
of $\B$. The generator $\sigma=(12345)$
has determinant one on $U$: its determinant on $\R^6$ is one and it
fixes $\mathbf1$. Hence its determinant on
$V\cong\Hom(U,\R^5)$ is $(\det\rho(\sigma))^5=1$.
On $W$, the fifteen edges divide into three orbits of length five:
the five edges incident to label six, the adjacent pairs in the
five-cycle, and its nonadjacent pairs. The determinant is again one.
The time action is trivial. Taking determinant lines in
\eqref{eq:orientation} shows that the action on $T\B$ preserves
orientation. The boundary collars also give the usual induced
orientations on the endpoint fibers.

\textbf{Step 5.} Identify the Euclidean fiber and prove regularity.

\nopagebreak[4]
For a centered Euclidean unit-equilateral tuple set
$r_i=|y_i|^2$ and $R=\sum_i r_i$. Summing the five distances from
$y_i$ to the other vertices gives $6r_i+R=5$.
All $r_i$ are therefore equal, $r_i=5/12$, and
$\langle y_i,y_j\rangle=-1/12$ for $i\ne j$. This is precisely
the Gram matrix of the $b_i$ in~\eqref{eq:simplex}. The linear map
$A_yb_i=y_i$ is consequently an isometry. Conversely each isometry
gives such a tuple, and the linear dictionary~\eqref{eq:matrix}
makes this identification an equivariant diffeomorphism.

To check regularity, suppose weights $\lambda_{ij}=\lambda_{ji}$
annihilate the fifteen edge differentials on $V$. Put
$g_i=\sum_{j\ne i}\lambda_{ij}(y_i-y_j)$.
The annihilation condition is
$\sum_i\langle g_i,z_i\rangle=0$ for every centered tangent tuple
$(z_i)$. Taking $z_i=v$, $z_j=-v$ shows that all $g_i$ are equal.
Their sum is zero, so each $g_i=0$. At a fixed simplex vertex the
five incident edges form a basis of $\R^5$, and therefore every
incident weight vanishes. Thus all $\lambda_{ij}=0$, proving that
the edge differential is onto.
\end{proof}

The construction uses every zero of the distance equations.
In particular, configurations are retained when their matrices
lose rank. By Lemma~\ref{lem:rank}, the only possible ranks are
four and five.

\papersubsection{A graph model for the relation-line space}

We construct an equivariant graph model for compact regions of the
admissible relation-line space. This reduces the later homology
calculation to bundles over a finite graph.

Normalize a nonzero relation by $S(c)=1$. Define
\begin{equation}\label{eq:kernel}
 X_{\bal}=\bigl\{c\in U:S(c)=1,\ \osc(c)<1\bigr\},\qquad
 K_{\bal}=X_{\bal}/(c\sim-c).
\end{equation}
The second space parametrizes lines satisfying~\eqref{eq:balance}.
For $0<\delta<1/6$ put
\[
 X_\delta=\bigl\{c\in U:S(c)=1,\ \osc(c)\leq1-\delta\bigr\},\qquad
 K_\delta=X_\delta/(c\sim-c).
\]
These compact spaces exhaust $K_{\bal}$ as $\delta\downarrow0$.
Here $K_{\bal}$ is viewed as the open subset of $\mathbb{RP}(U)$
defined by $\osc(c)/S(c)<1$. Normalization $c\mapsto c/S(c)$
identifies its double cover with $X_{\bal}$. In particular the
topologies in~\eqref{eq:kernel} are the usual projective and quotient
topologies. Every compact subset of $K_{\bal}$ lies in some
$K_\delta$, since its positive continuous balance margin has a
positive minimum.
Let $\Gamma$ have one vertex for each subset $I\subset\{1,\ldots,6\}$
of size $2$, $3$ or $4$, and an edge between $I$ and $J$ when
$I\subset J$ and $|J|=|I|+1$. Complementation acts on $\Gamma$;
write $\bar\Gamma=\Gamma/(I\sim I^c)$.

\begin{lemma}\label{lem:graph}
For every $0<\delta<1/6$ there is an
$S_6\times\{\pm1\}$-equivariant homotopy equivalence
$X_\delta\simeq\Gamma$, where $-1$ acts by complementation on
$\Gamma$. Consequently $K_\delta\simeq\bar\Gamma$
$C_5$-equivariantly. The $C_5$-actions on $K_{\bal}$ and
$\bar\Gamma$ are free. The connected graph $\bar\Gamma/C_5$ has
five vertices and twelve edges.
\end{lemma}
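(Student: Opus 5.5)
The plan is to build an explicit $S_6\times\{\pm1\}$-equivariant embedding $\Gamma\hookrightarrow X_\delta$ and an equivariant deformation retraction of $X_\delta$ onto its image. The remaining assertions then follow by passing to quotients and by direct counting.

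\textbf{Step 1: sign patterns.} For $c\in X_\delta$ write $P=\{c_i>0\}$ and $M=\{c_i<0\}$. The positive and negative masses both equal one, so $\max_i c_i\geq 1/|P|$ and $-\min_i c_i\geq1/|M|$. The condition $\osc(c)<1$ therefore forbids $|P|=1$ and $|M|=1$. Hence $|P|,|M|\in\{2,3,4\}$ with $|P|+|M|\leq6$.

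\textbf{Step 2: the embedding.} To a vertex $I$ assign the balanced vector
\[
 c_I=\tfrac1{|I|}\mathbf 1_I-\tfrac1{|I^c|}\mathbf 1_{I^c}.
\]
It satisfies $S(c_I)=1$ and $\osc(c_I)=1/|I|+1/|I^c|\leq3/4<1-\delta$, because $\delta<1/6$. It also satisfies $c_{I^c}=-c_I$ and $\sigma c_I=c_{\sigma I}$.

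To an edge $I\subset J$ assign the path $t\mapsto \gamma(t)/S(\gamma(t))$, where $\gamma(t)=(1-t)c_I+tc_J$. One must check that $\osc(\gamma(t))/S(\gamma(t))\leq1-\delta$ along this path. Since $\osc$ is a seminorm, $\osc(\gamma(t))\leq3/4$. So it suffices to bound $S(\gamma(t))$ from below, which is a one-variable computation on the coordinates in $J\setminus I$. This defines an equivariant map $\Gamma\to X_\delta$, and injectivity is visible from the sign patterns.

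\textbf{Step 3: the retraction, the main obstacle.} The natural approach is to stratify $X_\delta$ by the order pattern of the coordinates. For each threshold $s$, the upper set $\{i:c_i>s\}$ is a candidate vertex of $\Gamma$, and the thresholds producing admissible sizes sweep out a chain of nested admissible sets.

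The danger is that a naive nerve or order-complex argument would also produce chains $I\subset K$ with $|K|=|I|+2$. That would create triangles, and the order complex of the poset of $2$-, $3$- and $4$-subsets is not $\Gamma$. The balance bound must be used to show that such chains cannot be realised robustly. I expect it to show that at most two consecutive admissible upper sets occur, with a controlled margin, on each compact piece $X_\delta$.

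I would first try a piecewise-linear flow on each order cell. Within a fixed weak ordering of the coordinates, move $c$ linearly toward the nearest $c_I$ or edge path, always keeping $S=1$ and decreasing $\osc/S$. This flow is manifestly equivariant because it is defined through order data only. Checking that it stays inside $\osc\leq1-\delta$ and glues continuously across cell walls is where the real work lies.

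\textbf{Step 4: quotients, freeness and counting.} The $\{\pm1\}$-equivariance of Steps 2 and 3 gives $K_\delta\simeq\bar\Gamma$ $C_5$-equivariantly.

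For freeness on $K_{\bal}$, suppose a generator $\sigma$ of $C_5$ fixes the line of $c$, so $\sigma c=\pm c$. The sign $-c$ is impossible, because $\sigma$ has odd order and $\sigma^5c=-c$ would force $c=0$. If $\sigma c=c$, then $c$ is constant on the labels $1,\ldots,5$, so $c=t(1,1,1,1,1,-5)$. But then $S(c)=5|t|<6|t|=\osc(c)$, which is excluded.

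For freeness on $\bar\Gamma$, a fixed point would need $\sigma I\in\{I,I^c\}$. The case $\sigma I=I^c$ is again impossible by odd order. The only $C_5$-invariant subsets have size $0$, $1$, $5$ or $6$, so $\sigma I=I$ is also impossible.

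For the counts, $\Gamma$ has $15+20+15=50$ vertices and $15\cdot4+20\cdot3=120$ edges. Complementation acts freely, with no fixed vertices, leaving $25$ vertices and $60$ edges. The free $C_5$-action then leaves $5$ vertices and $12$ edges. The graph $\Gamma$ is connected, since every $2$-set joins a $3$-set and consecutive layers are linked. Hence its quotient is connected.
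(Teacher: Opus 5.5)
Your Step 3 is the substance of the lemma, and it is not proved. You describe a flow you ``would first try'' and a property you ``expect'', but you construct no map $X_\delta\to\Gamma$ and no homotopies. Moreover, the expected property is false for the threshold stratification you propose. Take $c=(0.45,0.35,0.2,-0.2,-0.35,-0.45)$. Then $S(c)=1$ and $\osc(c)=0.9$, so $c\in X_\delta$ for $\delta\leq1/10$. The thresholds in $[0.2,0.35)$, $[-0.2,0.2)$ and $[-0.35,-0.2)$ give the three admissible upper sets $\{1,2\}\subset\{1,2,3\}\subset\{1,2,3,4\}$. The coordinates are distinct, so this persists on an open subset of $X_\delta$. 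Chains with $|K|=|I|+2$ are therefore realised robustly, and an order-pattern complex built this way does contain triangles.

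The missing idea is one line beyond your Step 1. The inequality $\osc(c)\geq1/|P|+1/|M|$ also excludes $|P|=|M|=2$, so $|P|+|M|\geq5$ and at most one coordinate of $c\in X_\delta$ vanishes. Stratify by sign only. For admissible $I$, let $C_I$ be the set of $c\in U$ with $\sum_{i\in I}c_i=1$, $c_i\geq0$ on $I$, $c_j\leq0$ off $I$, and $c_i-c_j\leq1-\delta$ for $i\in I$, $j\notin I$. Each $C_I$ is a compact convex polytope, and these cells cover $X_\delta$.

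If $C_I\cap C_J\neq\varnothing$, every coordinate in $I\mathbin{\triangle}J$ vanishes at a common point. Hence the two sets are nested with sizes differing by one, and no three cells meet. Conversely, $C_I\cap C_J$ contains the point with value $1/|I|$ on $I$, $0$ on $J\setminus I$ and $-1/(5-|I|)$ off $J$; its oscillation is $5/6\leq1-\delta$. Thus $X_\delta$ is a union of convex pieces whose nerve is exactly $\Gamma$.

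A map $g:X_\delta\to\Gamma$ then sends $C_I$ to the vertex $I$ outside small collars of the shared faces. It maps the collar of $C_I\cap C_J$ onto the corresponding half-edge by distance. You get $f\circ g\simeq\mathrm{id}$ by straight-line homotopy inside a single cell, which is needed because $X_\delta$ is not convex. You get $g\circ f\simeq\mathrm{id}$ half-edge by half-edge. Everything is equivariant because it is defined through cells and distances.

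Two smaller points. In Step 2, the bound $\osc(\gamma(t))\leq3/4$ cannot suffice: $S(\gamma(t))$ drops to $6/7$ where the new coordinate changes sign, and $(3/4)/(6/7)=7/8>5/6$. Instead, observe that your normalized path is the segment from $c_I$ to that midpoint followed by the segment from it to $c_J$. Each segment lies inside one convex cell, and the maximal ratio is exactly $5/6$. Second, for freeness on $\bar\Gamma$ and the count of twelve edges, you must also exclude an element fixing an edge setwise; odd order rules out swapping its endpoints.
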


\begin{proof}
We divide the proof into three steps.

\textbf{Step 1.} Determine the sign-cell intersections.

\nopagebreak[4]
If $c\in X_\delta$ has $p$ positive and $q$ negative coordinates,
then
\[
 \osc(c)\geq\frac1p+\frac1q.
\]
If $p=1$ or $q=1$, the right side exceeds one. If $p+q\leq4$,
then $1/p+1/q\geq4/(p+q)\geq1$. Both possibilities contradict
$\osc(c)<1$. Hence $p,q\geq2$ and $p+q\geq5$, so at most one
coordinate vanishes. For each allowed sign set $I$ define the
compact convex polytope
\[
 C_I=\left\{c\in U:
 \begin{aligned}
 &\sum_{i\in I}c_i=1,\quad c_i\geq0,\ i\in I,\quad c_j\leq0,\ j\notin I,\\
 &c_i-c_j\leq1-\delta,\quad i\in I,\ j\notin I
 \end{aligned}\right\}.
\]
The defining conditions are linear equalities and inequalities,
and $|c_i|\leq1$, so $C_I$ is indeed compact and convex.
These polytopes cover $X_\delta$, by assigning a possible zero
coordinate to either sign. If $C_I$ and $C_J$ meet, all
coordinates in $I\mathbin{\triangle}J$ vanish. Hence distinct cells
meet only for an edge of $\Gamma$, and no three meet. Conversely,
if $I\subset J$ and $|J|=|I|+1$, their intersection contains the
point with coordinate zero on $J\setminus I$, value $1/|I|$ on
$I$, and value $-1/(5-|I|)$ outside $J$. Its oscillation is
$5/6<1-\delta$.

\textbf{Step 2.} Construct equivariant maps in both directions.

\nopagebreak[4]
We give an explicit equivariant homotopy equivalence. Map the vertex
$I$ to the point $b_I$ with coordinates $1/|I|$ on $I$ and
$-1/(6-|I|)$ on $I^c$. Map the midpoint of the edge $I-J$ to
the point $b_{IJ}$ just described, and map each half-edge linearly.
The resulting map $f:\Gamma\to X_\delta$ is equivariant. Its
half-edge images lie in the corresponding convex cells. Indeed the
oscillation of $b_I$ is $3/4$ for $|I|=2,4$ and $2/3$ for $|I|=3$,
while that of $b_{IJ}$ is $5/6$; all are at most $1-\delta$.
Convexity keeps every joining segment in the appropriate cell.

For an edge $e=I-J$ write $E_e=C_I\cap C_J$. The sets $E_e$
are pairwise disjoint and compact. Choose $\eta>0$ so small that
their $\eta$-neighborhoods are disjoint, miss all nonincident cells
and all points $b_I$, and, for each $e$, miss every half-edge image
other than the two belonging to $e$.
Such a choice is possible: a half-edge image meets the shared
faces only at its own midpoint, as its other coordinates retain
their strict signs. The distance and the finite collection of
conditions are invariant under permutations and sign reversal.

For $c\in C_I$ outside these neighborhoods put $g(c)=I$.
If $c$ is within distance $\eta$ of $E_e$ for an incident edge,
put $g(c)$ on that half-edge at fraction
$1-\operatorname{dist}(c,E_e)/\eta$ of the way from $I$ to the
midpoint. There is at most one such edge. On $E_e$ the two
definitions agree at the midpoint. This defines a continuous
equivariant map $g:X_\delta\to\Gamma$. More explicitly, its restriction
to each $C_I$ is continuous: at distance $\eta$ the half-edge parameter
tends to zero, and on a shared face it tends to the midpoint from
either cell. Since the cells form a finite closed cover and the
cellwise maps agree on their intersections, they paste to a continuous
map on $X_\delta$.

\textbf{Step 3.} Prove the homotopy equivalence and freeness.

\nopagebreak[4]
Both $c$ and $f(g(c))$ lie in the same cell, and in the same shared
face when $c$ lies on one. The formula
$H(c,s)=(1-s)c+sf(g(c))$ is therefore a well-defined homotopy in
$X_\delta$. This point matters because $X_\delta$ itself need not
be convex: interpolation is performed within a single cell, where
the normalization and sign conditions remain valid. On each
half-edge, the chosen neighborhood separations ensure that $gf$
stays in that half-edge and fixes its endpoint and midpoint.
Linear interpolation in this interval gives $gf\simeq\mathrm{id}$.
The homotopies agree at vertices and midpoints and commute with
permutations and sign reversal.

Complementation fixes neither a vertex nor an edge of $\Gamma$.
Thus the equivalence descends to $K_\delta\simeq\bar\Gamma$.
There are $15+20+15=50$ vertices and $15\cdot4+20\cdot3=120$
edges before complementing, so $\bar\Gamma$ has $25$ vertices
and $60$ edges. The graph is connected: two-element subsets can
be joined by exchanging one element through a three-element
superset, and the remaining vertices join to these.

No nonidentity element of $C_5$ fixes a vertex of $\bar\Gamma$.
An invariant subset is a union of the five-element orbit and the
singleton orbit, so its size is $0$, $1$, $5$ or $6$. If an
element carried $I$ to $I^c$, its square would fix $I$, giving
the same contradiction. An odd-order action cannot interchange
the endpoints of a fixed edge. The action on the graph is
therefore free. Likewise, a fixed real projective line for a
nontrivial five-cycle has a representative $c$ with
$\rho(\sigma)^rc=\alpha c$. Since $\alpha$ is real and
$\alpha^5=1$, it equals one. The first five coordinates are equal
and the zero-sum condition forces the sixth to be minus five times
their value. Thus the representative is proportional to
$(1,1,1,1,1,-5)$. This vector has $\osc(c)/S(c)=6/5$, so the
line is outside $K_{\bal}$. Dividing the graph counts by five
finishes the proof.
\end{proof}

\papersection{Proof of the main results}
\label{sec:proof}

We prove Theorem~\ref{thm:main} by combining the configuration bordism
with the graph model from Section~\ref{sec:configurations}. First we show
that the boundary associated with the relation graph contributes zero
to tenth homology. We then cut at a regular determinant level and
compare this contribution with the nonzero Euclidean boundary class.

\papersubsection{Homology of bundles over the relation graph}

We compute the homology groups needed for the graph model and show
that its associated bundle maps trivially to the quotient isometry
group in degree ten.

The group $\SO(5)$ is a compact connected Lie group of dimension ten
\cite[Section~3.D]{Hatcher2002}.

\begin{lemma}\label{lem:homology}
One has $H_9(\SO(5);\F_5)=0$. If $E\to T$ is a fiber bundle over
a finite graph with fiber $\SO(5)$, then $H_{10}(E;\F_5)$ is
generated by the images of the fundamental classes of fibers over
the vertices of $T$.
\end{lemma}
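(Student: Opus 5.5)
Two things have to be shown: a vanishing result for $\SO(5)$, and a Mayer--Vietoris argument over the graph.

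\textbf{The group $\SO(5)$.} Since $\SO(5)$ is a closed orientable $10$-manifold, Poincar\'e duality over $\F_5$ gives $H_9\cong H^1\cong\Hom(H_1,\F_5)$. The group $\pi_1(\SO(5))=\mathbb Z/2$, so $H_1(\SO(5);\mathbb Z)=\mathbb Z/2$. Because $2$ is invertible in $\F_5$, the universal coefficient theorem gives $H_1(\SO(5);\F_5)=0$, and hence $H_9=0$.

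A second route is the double cover $\operatorname{Spin}(5)=\operatorname{Sp}(2)\to\SO(5)$. Its deck group has order $2$, which is invertible in $\F_5$, and the nontrivial deck transformation is multiplication by $-1\in\operatorname{Sp}(2)$. That is left translation by an element of the connected group $\operatorname{Sp}(2)$, so it is homotopic to the identity. Lemma~\ref{lem:boundary-cover} therefore gives $H_*(\SO(5))\cong H_*(\operatorname{Sp}(2))=\Lambda(x_3,x_7)$, which vanishes in degree $9$. I would use this route, or simply cite the standard Poincar\'e duality and universal coefficient argument.

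\textbf{Covering the graph.} Write $T$ with vertex set $V_0$ and edge set $E_1$. Cover $T$ by two open sets.
\begin{itemize}
\item $A$ is a small open star neighborhood of each vertex. It deformation retracts onto $V_0$.
\item $B$ is the union of the open edges. It deformation retracts onto the edge midpoints.
\end{itemize}
The intersection $A\cap B$ is a disjoint union of open half-edges, two for each edge, each contractible. Bundles over contractible pieces are trivial, and homotopy invariance (Lemma~\ref{lem:homology-tools}) identifies the pieces:
\begin{itemize}
\item $E|_A\simeq\bigsqcup_{v\in V_0}F_v$,
\item $E|_B\simeq\bigsqcup_{e\in E_1}F_{m_e}$,
\item $E|_{A\cap B}\simeq$ a disjoint union of $2|E_1|$ copies of $\SO(5)$.
\end{itemize}

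\textbf{The Mayer--Vietoris step.} Apply Lemma~\ref{lem:homology-tools}(iv) to the open cover $E=E|_A\cup E|_B$:
\[
 H_{10}(E|_A)\oplus H_{10}(E|_B)\xrightarrow{\ \beta\ }H_{10}(E)\xrightarrow{\ \partial\ }H_9(E|_{A\cap B})=0.
\]
The right-hand group vanishes by the first part, so $\beta$ is onto. Thus $H_{10}(E)$ is spanned by the classes $[F_v]$ for $v\in V_0$ and $[F_{m_e}]$ for $e\in E_1$.

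It remains to remove the midpoint classes. Slide the fiber along the half-edge from $m_e$ to an endpoint $v$. This is a homotopy of maps $\SO(5)\to E$, induced by the trivialization over the closed half-edge. Hence $[F_{m_e}]=\pm[F_v]$, with the sign depending only on the chosen identification of fibers. The statement is about images of fundamental classes, which are defined up to that choice, so the fiber classes over vertices generate $H_{10}(E)$.

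\textbf{Expected obstacle.} The main point of care is the vanishing $H_9(\SO(5);\F_5)=0$ and quoting it correctly. The bundle step is routine once that vanishing makes the Mayer--Vietoris connecting map zero. One further check is that $E$ has the homotopy type of a CW complex, which is needed to apply the homology tools. This holds because $E$ is a fiber bundle over a finite graph with compact manifold fiber, and any remaining subtlety is avoided because Mayer--Vietoris holds for arbitrary open covers.
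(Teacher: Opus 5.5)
Your proof is correct, but both halves are organized differently from the paper's. For $H_9(\SO(5);\F_5)=0$, the paper avoids Poincar\'e duality and $\pi_1$ altogether. It transfers $H_*(S^3\times S^3;\F_5)$ to $\SO(4)$ through the quaternionic double cover. It then runs Mayer--Vietoris for the bundle $\SO(4)\to\SO(5)\to S^4$ over two hemispheres, which needs only $H_9(\SO(4))=H_8(\SO(4))=H_5(\SO(4))=0$. Your duality argument, $H_9\cong H^1\cong\Hom(H_1(\SO(5);\mathbb Z),\F_5)=\Hom(\mathbb Z/2,\F_5)=0$, is much shorter and shows why the vanishing holds: $\pi_1$ has no $5$-torsion. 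It does import Poincar\'e duality and $\pi_1(\SO(5))=\mathbb Z/2$, neither of which is among the tools assembled in Section~\ref{sec:preliminaries}. Your $\operatorname{Sp}(2)$ route is the paper's transfer trick applied once, directly to $\SO(5)$, at the cost of quoting $\operatorname{Spin}(5)\cong\operatorname{Sp}(2)$ and $H_*(\operatorname{Sp}(2))$. For the bundle statement, the paper uses the pair $(E,E_0)$ with $E_0$ the vertex fibers and checks that it is a good pair. It identifies $E/E_0$ with a wedge of copies of $(\SO(5)\times[0,1])/(\SO(5)\times\{0,1\})$ and gets $H_{10}(E,E_0)\cong\bigoplus_{\text{edges}}H_9(\SO(5))=0$ from the relative K\"unneth formula, so the vertex fibers surject at once. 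Your Mayer--Vietoris over vertex stars and open edges reduces to the same vanishing, with one copy of $H_9(\SO(5))$ per half-edge rather than per edge. It needs no good-pair check, no K\"unneth formula and no CW hypothesis. The price is that it produces midpoint classes that must then be slid to the vertices. That step is sound, including for loop edges, since each closed half-edge embeds in $T$ and the bundle is trivial over it.
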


\begin{proof}
The standard quaternionic covering~\cite[Section~3.D, p.~294]{Hatcher2002} is
\[
 S^3\times S^3\longrightarrow\SO(4),\qquad
 (a,b)\longmapsto\bigl(x\longmapsto axb^{-1}\bigr).
\]
Its kernel is $\{(1,1),(-1,-1)\}$. Indeed, if $axb^{-1}=x$ for
all quaternions $x$, then $a=b$ by taking $x=1$, and $a$ commutes
with all quaternions, so $a=b=\pm1$.
At the identity its differential sends a pair of imaginary quaternions
$(\alpha,\beta)$ to the skew-adjoint map $x\mapsto\alpha x-x\beta$.
If this map is zero, then $\alpha=\beta$ is both imaginary and central,
so it is zero. The differential is therefore an isomorphism between
six-dimensional Lie algebras. Its image is open, and is closed by
compactness of $S^3\times S^3$. Since $\SO(4)$ is connected, the
map is surjective and is the asserted two-sheeted covering.
The nontrivial deck transformation is translation by $(-1,-1)$,
which is homotopic to the identity since $S^3\times S^3$ is connected.
Formula~\eqref{eq:transfer} says that both transfer composites are
multiplication by two. Therefore the covering induces a homology
isomorphism over $\F_5$. Applying~\eqref{eq:kunneth} to $S^3\times S^3$
gives $H_5(\SO(4);\F_5)=0$. This six-manifold has no homology above
degree six~\cite[Theorem~3.26]{Hatcher2002}.

The last-column map $\SO(5)\to S^4$ is a locally trivial bundle
with fiber $\SO(4)$~\cite[Example~4.55]{Hatcher2002}.
Cover $S^4$ by two slightly enlarged open hemispheres $U_+$ and $U_-$.
The bundle is trivial over each: stereographic coordinates from the
opposite pole give a tangent frame; Gram--Schmidt orthonormalization
and an orientation choice, followed by adjoining the unit normal,
give a smooth section of the frame bundle. Their intersection retracts to $S^3$, and a
trivialization over $U_+$ restricts to one over that intersection.
Consequently the inverse images have homotopy type $\SO(4)$ and
their intersection has homotopy type $\SO(4)\times S^3$. The relevant
Mayer--Vietoris terms from~\eqref{eq:mv-sequence} are
\[
 0=H_9\bigl(\SO(4);\F_5\bigr)^{\oplus2}\longrightarrow
 H_9\bigl(\SO(5);\F_5\bigr)\longrightarrow H_8\bigl(\SO(4)\times S^3;\F_5\bigr)=0,
\]
where~\eqref{eq:sphere-product} gives
\[
 H_8\bigl(\SO(4)\times S^3;\F_5\bigr)
 =H_8\bigl(\SO(4);\F_5\bigr)\oplus H_5\bigl(\SO(4);\F_5\bigr)=0.
\]
This proves the first assertion.

For the second, let $E_0$ be the union of fibers over the vertices.
Pull the bundle back to each characteristic interval of an edge and
trivialize it there. This description also applies to loop edges: their
two endpoint fibers are attached to the appropriate vertex fibers by
possibly different identifications. Short open stars of the vertices
lift to a neighborhood of $E_0$ which deformation retracts onto $E_0$:
in each incident edge chart, contract the interval coordinate towards
its endpoint. These contractions fix the endpoint fibers and hence
agree with their gluing maps. Thus $(E,E_0)$ is a good pair, as required in
Lemma~\ref{lem:products-quotients}.
After collapsing $E_0$, the quotient is a wedge, one summand per edge,
of the spaces
\[
 \bigl(\SO(5)\times[0,1]\bigr)/\bigl(\SO(5)\times\{0,1\}\bigr).
\]
The image of the two endpoint collars in each summand is a
neighborhood that contracts to its basepoint, so the wedge also
satisfies the good-pair hypothesis. Formulas~\eqref{eq:good-quotient}
and~\eqref{eq:wedge-homology}, followed by~\eqref{eq:interval-product}, give
\begin{align*}
H_{10}(E,E_0;\F_5)
 &\cong\bigoplus_{\text{edges}}
 H_{10}\bigl(\SO(5)\times[0,1],\SO(5)\times\{0,1\};\F_5\bigr)\\
 &\cong\bigoplus_{\text{edges}}H_9\bigl(\SO(5);\F_5\bigr)=0.
\end{align*}
The exact sequence~\eqref{eq:pair-sequence} shows that
$H_{10}(E_0;\F_5)\to H_{10}(E;\F_5)$ is onto. This argument allows
nontrivial monodromy and does not require the bundle over $T$ to be
trivial.
\end{proof}

Restrict the representation $\rho$ to $C_5$. A five-cycle is an even
permutation, so $\rho(C_5)\subset\SO(U)$. We use the actions
\[
 A\longmapsto A\rho(\sigma)^{-1},\qquad
 L\longmapsto\rho(\sigma)L
\]
on matrices and relation lines. Set $M=\SO(5)/C_5$.
Right translations preserve orientation, so $M$ is a connected
closed oriented ten-manifold and $[M]\ne0$ in $H_{10}(M;\F_5)$.

\begin{lemma}\label{lem:vanishing}
For $0<\delta<1/6$, let
\[
 E_\delta=\bigl(\SO(5)\times K_\delta\bigr)/C_5,
 \qquad p_\delta:E_\delta\to M,\quad [V,L]\longmapsto[V],
\]
where $C_5$ acts diagonally by
$(V,L)\mapsto(V\rho(\sigma)^{-1},\rho(\sigma)L)$.
Then
\[
 (p_\delta)_*:H_{10}(E_\delta;\F_5)\longrightarrow H_{10}(M;\F_5)
 \quad\text{is zero}.
\]
\end{lemma}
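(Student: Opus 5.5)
The plan is to reduce to a fiber bundle over a finite graph and apply Lemma~\ref{lem:homology}. By Lemma~\ref{lem:graph} there is a $C_5$-equivariant homotopy equivalence $K_\delta\simeq\bar\Gamma$. The $C_5$-action on $\bar\Gamma$ is free, and so is the action on $K_\delta\subset K_{\bal}$. Hence the diagonal action on $\SO(5)\times K_\delta$ is free. Moreover, the equivariant homotopy equivalence $\SO(5)\times K_\delta\simeq\SO(5)\times\bar\Gamma$ (identity on the first factor) descends to the quotients.

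This gives a homotopy equivalence
\[
E_\delta\simeq E_\Gamma=\bigl(\SO(5)\times\bar\Gamma\bigr)/C_5,
\]
which commutes with the projections to $M=\SO(5)/C_5$. The projection $p_\delta$ only uses the $\SO(5)$ coordinate, so compatibility holds on the nose before passing to quotients. The homotopies descend because they commute with $C_5$. By Lemma~\ref{lem:homology-tools}(i), it therefore suffices to show that the projection $p:E_\Gamma\to M$ is zero on $H_{10}$.

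Next, I view $E_\Gamma$ as a fiber bundle over $T=\bar\Gamma/C_5$ with fiber $\SO(5)$, via $[V,L]\mapsto[L]$. Local triviality follows from freeness: over a small open set $O\subset T$ that lifts to a slice $\tilde O\subset\bar\Gamma$, the preimage is $\SO(5)\times\tilde O$. By Lemma~\ref{lem:graph}, $T$ is a finite connected graph with five vertices and twelve edges, some possibly loops; the proof of Lemma~\ref{lem:homology} already handles loop edges. Lemma~\ref{lem:homology} then says that $H_{10}(E_\Gamma;\F_5)$ is generated by the fundamental classes of the fibers over the five vertices.

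It remains to compute $p_*$ on a vertex fiber. Over the vertex $[L_0]$, with $L_0\in\bar\Gamma$ a lift, the fiber is $\SO(5)\times\{L_0\}$. Under $p$ it maps by $V\mapsto[V]$, which is the quotient covering $q:\SO(5)\to\SO(5)/C_5$. This covering is five-sheeted and orientation preserving, since right translations preserve orientation. Lemma~\ref{lem:boundary-cover} gives $q_*[\SO(5)]=5[M]=0$ in $H_{10}(M;\F_5)$. Since $p_*$ kills every generator, $p_*=0$, and hence $(p_\delta)_*=0$.

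The main obstacle is the reduction in the second paragraph. One must check that the homotopy equivalence of Lemma~\ref{lem:graph} really descends to the $C_5$-quotients compatibly with the projection to $M$. One must also check that $E_\Gamma\to T$ is a genuine locally trivial bundle, so that Lemma~\ref{lem:homology} applies. Both points rest on the freeness of the $C_5$-action on $\bar\Gamma$ established in Lemma~\ref{lem:graph}, and on the equivariance of the maps $f$, $g$ and the homotopies constructed there. The vertex-fiber computation is then immediate.
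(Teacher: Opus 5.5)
Your proposal is correct and follows the paper's proof essentially step for step. Like the paper, you descend the equivariant equivalence $K_\delta\simeq\bar\Gamma$ to a homotopy equivalence $E_\delta\simeq E_\Gamma$ over $M$, use freeness on $\bar\Gamma$ to make $E_\Gamma$ an $\SO(5)$-bundle over $\bar\Gamma/C_5$, apply Lemma~\ref{lem:homology}, and kill each vertex-fiber class via the five-sheeted covering $\SO(5)\to M$. (A minor simplification is available: the factorization $p_\delta=p_\Gamma\circ\bar g$ through the map induced by $\mathrm{id}\times g$ already gives the vanishing, and the descent needs only equivariance, since the diagonal action is free by the $\SO(5)$ factor; freeness on $\bar\Gamma$ matters only for the bundle structure.)
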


\begin{proof}
The equivariant maps and homotopies in Lemma~\ref{lem:graph}, applied
only to the second factor, give a homotopy equivalence over $M$
between $E_\delta$ and
\[
 E_\Gamma=\bigl(\SO(5)\times\bar\Gamma\bigr)/C_5.
\]
The latter projects to $\bar\Gamma/C_5$ as a bundle with fiber
$\SO(5)$, because the action on $\bar\Gamma$ is free. To see the
local trivialization explicitly, take an evenly covered neighborhood
$J$ in $\bar\Gamma/C_5$ and a lift $s:J\to\bar\Gamma$.
Every orbit over $J$ has a unique representative $(V,s(x))$,
giving $\SO(5)\times J$ as a product chart. Different lifts change
$V$ by a right translation which is locally constant on the base
(constant on each connected overlap). Thus this is a bundle even if
the covering of the graph is nontrivial.
Lemma~\ref{lem:homology} says that its tenth homology is generated
by vertex-fiber classes. On each such fiber, the map to $M$ is
the five-sheeted covering $\SO(5)\to\SO(5)/C_5$. Its degree is
five, which is zero in $\F_5$. The homotopy equivalence from
$E_\delta$ to $E_\Gamma$ preserves the first-coordinate projection
to $M$ throughout both homotopies, so this vanishing is precisely
the asserted vanishing of $(p_\delta)_*$.
\end{proof}

\papersubsection{The determinant cut}

We use the preceding homological vanishing to rule out a bordism with
the rank and relation constraints established in Section~\ref{sec:configurations}.
Applying this obstruction to the configuration bordism proves
Theorem~\ref{thm:main}.

\begin{lemma}\label{lem:cut}
There is no compact oriented smooth eleven-manifold $B$ with the
following data and properties: a free smooth action of $C_5$ preserving
orientation, and a smooth equivariant map $A:B\to\Hom(U,\R^5)$ satisfying
\begin{enumerate}[label=\textup{(\roman*)}]
\item the entire boundary is equivariantly identified with
      $O(U,\R^5)$, and $A$ restricts there to the inclusion;
\item $\rank A(b)\geq4$ for all $b\in B$;
\item $\ker A(b)\in K_{\bal}$ whenever $\rank A(b)=4$.
\end{enumerate}
\end{lemma}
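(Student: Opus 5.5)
We argue by contradiction, following the outline in the introduction: cut $B$ at a small regular level of the determinant and map both pieces of the resulting boundary to $M=\SO(5)/C_5$.

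\textbf{Step 1: the determinant function.} Fix the chosen orientations and set $d=\det A:B\to\R$. Equivariance gives $d(\sigma b)=\det(A(b)\rho(\sigma)^{-1})=d(b)$, since $\rho(C_5)\subset\SO(U)$. Hence $d$ descends to $\bar B=B/C_5$. By Lemma~\ref{lem:finite-quotient}, $\bar B$ is a compact oriented eleven-manifold with boundary $O(U,\R^5)/C_5$. On that boundary $|d|=1$. We will use only the component on which $d=+1$, namely $\SO(5)/C_5=M$. The other component $d=-1$ is handled symmetrically, or simply by cutting at $\{d\ge\varepsilon\}$ only.

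Choose a regular value $\varepsilon\in(0,1)$ of $d$ on the interior, using Lemma~\ref{lem:sard}, and take it small (the required size is fixed in Step~3). Then $Y=\{d\ge\varepsilon\}\subset\bar B$ is a compact oriented eleven-manifold with
\[
\partial Y=M\sqcup Z_\varepsilon,\qquad Z_\varepsilon=\{d=\varepsilon\},
\]
oriented so that the fundamental classes of the two pieces appear with opposite sign.

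\textbf{Step 2: the map to $M$.} On $\{d>0\}$, positive polar decomposition $A=P\,V$, with $V\in\SO(5)$ equal to $(AA^*)^{-1/2}A$, is smooth and equivariant. It therefore gives $\pi:Y\to M$, and $\pi$ is the identity on the Euclidean boundary. Applying \eqref{eq:boundary} to $\pi$ on $Y$ yields
\[
[M]\pm\pi_*[Z_\varepsilon]=0\quad\text{in }H_{10}(M;\F_5).
\]
Since $[M]\ne0$, it suffices to show that $\pi_*[Z_\varepsilon]=0$.

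\textbf{Step 3: factoring $Z_\varepsilon$ through $E_\delta$ (main obstacle).} Let $R_4\subset B$ be the compact set where $\rank A=4$. On $R_4$ the kernel is a line in $K_{\bal}$ by (iii), and it varies continuously there. Compactness then places all these kernels in some $K_{\delta'}$. Since $K_{\delta'}\subset\operatorname{int}K_\delta$ for $\delta<\delta'$, continuity of the least singular direction gives a neighborhood $\{\det A<\varepsilon_0\}\cap\{d\ge0\}$ of $R_4$ on which:
\begin{itemize}[label={}]
\item[(a)] the smallest singular value $s_5$ is simple, using the rank bound (ii);
\item[(b)] its right singular line $L(b)$ lies in $K_\delta$.
\end{itemize}
On this region $\rank\ge4$ makes $s_4$ bounded below by compactness, so $L$ is well defined and continuous. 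Choosing $\varepsilon<\varepsilon_0$, the map $b\mapsto[\pi(b),L(b)]$ is equivariant, since $L(b\sigma)=\rho(\sigma)L(b)$ when $A\mapsto A\rho(\sigma)^{-1}$. It therefore defines $Z_\varepsilon\to E_\delta$, and composing with $p_\delta$ recovers $\pi|_{Z_\varepsilon}$. Lemma~\ref{lem:vanishing} then gives $\pi_*[Z_\varepsilon]=(p_\delta)_*(\cdots)=0$, the required contradiction.

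The delicate points are the following:
\begin{itemize}[label={}]
\item[$\bullet$] A point with very small positive determinant but rank five must still have its small singular direction near a rank-four kernel. This follows by compactness: otherwise a limit would be a rank-four point whose kernel lies outside $K_\delta$.
\item[$\bullet$] The uniform margin from $K_{\bal}$ to $K_\delta$.
\item[$\bullet$] Orientation consistency of the cut, which only affects a sign and is harmless over $\F_5$.
\end{itemize}
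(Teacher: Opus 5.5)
Your proposal is correct and follows essentially the same route as the paper. You cut at a small regular positive level of $d=\det A$, map $\{d\ge\varepsilon\}$ to $M$ by the equivariant positive polar factor, and factor the new boundary through $E_\delta$ using the least right-singular line, whose uniform balance margin comes from the same compactness argument. The differences are cosmetic: you pass to $B/C_5$ before cutting, get the singular-value gap from openness rather than the explicit bound $s_5\le|d|/a^4$, and absorb the case $d^{-1}(0)=\varnothing$ into an empty level set; you should also state that $\delta<1/6$, which is always possible, so that Lemma~\ref{lem:vanishing} applies.
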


\begin{proof}
We divide the proof into three steps.

\textbf{Step 1.} Define the polar map and prove rank loss.

\nopagebreak[4]
Suppose such $B$ and $A$ exist, and set $d(b)=\det A(b)$.
This is $C_5$-invariant and equals $+1$ and $-1$ on the two
boundary components. For a positive-determinant matrix define
its positive polar factor
\begin{equation}\label{eq:polar}
 V(A)=A\bigl(A^*A\bigr)^{-1/2}\in\SO(5).
\end{equation}
It is smooth. For $R=\rho(\sigma)$, we have
\[
 \bigl(AR^{-1}\bigr)^*\bigl(AR^{-1}\bigr)=R\bigl(A^*A\bigr)R^{-1},\qquad
 V\bigl(AR^{-1}\bigr)=V(A)R^{-1}.
\]
Indeed, the inverse positive square root of the first expression is
$R(A^*A)^{-1/2}R^{-1}$, which gives the second identity and proves
equivariance.
We use $V$ only where $d$ is strictly positive.

If $d$ never vanishes, the part $\{d>0\}$ is compact and has
boundary $\SO(5)$. Its quotient maps to $M$ by~\eqref{eq:polar},
restricting to the identity on the boundary. Formula~\eqref{eq:boundary}
would give $[M]=0$, a contradiction. Hence $D_0=d^{-1}(0)$ is
nonempty.

\textbf{Step 2.} Extend the kernel line with a uniform balance margin.

\nopagebreak[4]
Order the singular values as $s_1\geq\cdots\geq s_5$.
Compactness and the rank bound give $s_4\geq a>0$ on $B$.
Since $|d|=s_1s_2s_3s_4s_5$, on $|d|\leq t$ we have
$s_5\leq t/a^4$. Choose $t<a^5/2$. Then $s_5<a/2$ while
$s_4\geq a$, so the least eigenvalue of $A^*A$ is simple and
separated by a uniform gap from the others. Its eigenspace $L(b)$
is a continuous line on this neighborhood of $D_0$. One way to
check continuity is to use its spectral projection
\[
 \Pi(b)=\frac{1}{2\pi\mathrm i}
       \int_{|z|=a^2/2}\bigl(zI-A(b)^*A(b)\bigr)^{-1}\,dz.
\]
The contour is traversed counterclockwise and the resolvent is taken
on the complexification of $U$. Diagonalizing the real symmetric
matrix shows that the contour encloses only the least eigenvalue
and that $\Pi$ is its real orthogonal projection. It has rank one
and depends smoothly on $b$ on the open spectral-gap region.
Conjugating $A^*A$ by $\rho(\sigma)$ conjugates $\Pi$ by the
same matrix, so $L$ is equivariant. On $D_0$, it is exactly $\ker A$.

For a line $L=\R c\subset U$ put
\[
 \mu(L)=1-\frac{\osc(c)}{S(c)}.
\]
This is continuous and independent of the nonzero representative.
It is positive on $L(D_0)$ by assumption. Let $m_0>0$ be its minimum
there. Shrink $t$ so that $\mu(L(b))\geq m_0/2$ whenever
$0\leq d(b)\leq t$, and choose $0<\delta<\min(1/6,m_0/2)$.
Then
\begin{equation}\label{eq:margin}
 L(b)\in K_\delta,\qquad 0\leq d(b)\leq t.
\end{equation}
Indeed, failure for every smaller $t$ would give a sequence
$b_k$ with $d(b_k)\downarrow0$ and $\mu(L(b_k))<m_0/2$.
A subsequence converges in $B$ to a point of $D_0$, where continuity
would contradict the defining minimum $m_0$.

\textbf{Step 3.} Cut at a regular level and compare boundary classes.

\nopagebreak[4]
Choose a regular value $0<\varepsilon<\min(t,1)$ of $d$ and set
\[
 B_\varepsilon=\bigl\{b\in B:d(b)\geq\varepsilon\bigr\},\qquad
 Z_\varepsilon=d^{-1}(\varepsilon),\qquad
 Y_\varepsilon=B_\varepsilon/C_5.
\]
Apply Lemma~\ref{lem:sard} to $d$ on the interior of $B$ to obtain
such a value. Since $d=\pm1$ on the original boundary, the level is
disjoint from that boundary, so $Y_\varepsilon$ is a compact oriented
manifold with boundary $M\sqcup(Z_\varepsilon/C_5)$.
There are no corners: near the original boundary, $d=\pm1$ on
that boundary and continuity keeps the new level away from it;
at the new level, regularity gives an ordinary half-space chart.
Lemma~\ref{lem:finite-quotient} gives the quotient charts, orientations
and boundary identification.
Polar decomposition induces $P:Y_\varepsilon\to M$ which is the
identity on $M$. On the other boundary its restriction factors as
\begin{align*}
 Z_\varepsilon/C_5&\longrightarrow E_\delta
       \xrightarrow{\ p_\delta\ }M,\\
 [b]&\longmapsto\Bigl[V\bigl(A(b)\bigr),L(b)\Bigr]
       \longmapsto\Bigl[V\bigl(A(b)\bigr)\Bigr].
\end{align*}
This is well defined by~\eqref{eq:margin} and equivariance.
Lemma~\ref{lem:vanishing} makes its map on tenth homology zero.
The level may be empty or disconnected; an empty level contributes
zero, and a nonempty compact level has finitely many components.
This holds for every component of $Z_\varepsilon/C_5$, with its
induced orientation, and therefore for their sum. On the connected
Euclidean component, the induced orientation is either the standard
orientation of $M$ or its negative, so the identity map contributes
exactly $\pm[M]$. Applying~\eqref{eq:boundary} to $P$ gives
\[
 0=P_*[\partial Y_\varepsilon]=\pm[M]+0,
\]
contrary to $[M]\ne0$.
\end{proof}

\begin{proof}[Proof of Theorem~\ref{thm:main}]
Suppose a norm on $\R^5$ has no six-point equilateral set.
By Lemma~\ref{lem:smoothing}, choose a smooth strongly convex
norm $N^1$ with the same property. Let $N^0$ be Euclidean.
Both endpoints are regular, the second because its fiber is empty.
Lemma~\ref{lem:bordism} gives a compact oriented
eleven-manifold $\B$ whose entire boundary is $O(U,\R^5)$.
The $C_5$-action is free and orientation preserving.

Assign to each $(t,y)\in\B$ the matrix $A_y$ from~\eqref{eq:matrix}.
This map is smooth, equals the boundary isometry at the Euclidean
endpoint, and satisfies
\[
 A_{\sigma\cdot y}=A_y\rho(\sigma)^{-1}.
\]
All norms on the path are strictly convex. Lemma~\ref{lem:rank}
therefore gives $\rank A_y\geq4$, and every rank-four kernel lies
in $K_{\bal}$. This contradicts Lemma~\ref{lem:cut}.
Thus the six points exist. Translating one of them to zero and
dividing by their common distance gives the stated normalization.
\end{proof}

\begin{remark}\label{rem:limitation}
For $n\geq6$, the rank bound and graph reduction above give no
corresponding control of $(n+1)$-point configurations. With seven labels,
$c=(1/3,1/3,1/3,-1/2,-1/2,0,0)$ satisfies $S(c)=1$ and
$\osc(c)=5/6$, but four sign cells meet at its two zero coordinates.
Thus the pairwise-intersection construction of Lemma~\ref{lem:graph}
fails; appending zeros gives the same example with more labels.
Without suitable rank and relation-space control, Lemma~\ref{lem:homology}
does not reduce the determinant-level homology to fiber classes, so
the covering-degree cancellation is unavailable. This limitation
does not decide the conjecture in the remaining dimensions.
\end{remark}

\par\vskip30pt
% Bibliography conventions follow arXiv:2608.13536v2.

\clearpage
\appendix
\noindent\begin{minipage}{\textwidth}
\papersection{Prompt provided to hyprover}\label{app:prompt}

The following prompt was written by the author and provided to hyprover.

\medskip
\noindent\textbf{Five-dimensional Petty problem}

\medskip
\noindent\textbf{Objective}

Prove that every five-dimensional real normed vector space contains six equilateral points.
Precisely, for every norm \(\|\cdot\|\) on \(\mathbb{R}^5\), prove that there exist distinct points
\(x_0,\ldots,x_5\) such that
\[
\|x_i-x_j\|=1 \qquad (0\leq i<j\leq 5).
\]
The norm is arbitrary; affine independence of the six points is not required.

\medskip
\noindent\textbf{Research hint}

Seek an appropriate differential-topological reformulation while preserving the original geometric constraints.
Differential geometry and algebraic topology may help; consult Lu--Yang's
\href{https://arxiv.org/abs/2608.13536}{\emph{A Solution to Banach's Isometric Conjecture}}
for methodological inspiration, not a ready-made argument.

\medskip
\noindent\textbf{Working requirements}

\begin{itemize}[label=\textendash,leftmargin=1.5em]
\item Work in English and preserve the exact target throughout.
\item Supply a complete, independently checkable proof, including all essential intermediate arguments and precise references for external results.
\item Distinguish proved statements, conjectures, failed approaches, and unresolved gaps. Never replace a missing argument with a claim of verification.
\item Do not count special cases, conditional reductions, numerical evidence, or a proof sketch as a solution.
\end{itemize}
\end{minipage}

\end{document}